\newtheorem{thm}{Theorem}[section]
\newtheorem{cor}[thm]{Corollary}
\newtheorem{lemma}[thm]{Lemma}
\newtheorem{lem}[thm]{Lemma}
\newtheorem{prop}[thm]{Proposition}
\newcommand\newnumbered[2]{\newtheorem{#1}{#2}}
\numberwithin{equation}{section}
\newcommand{\beq}{\begin{equation}}
\newcommand{\eeq}{\end{equation}}
\begin{document}

\title[A scattering transform for graphs based on heat semigroups]{A scattering transform for graphs based on heat semigroups, with an application for the detection of anomalies in positive time series with underlying periodicities}%

\author{Bernhard G.\ Bodmann and Iris Emilsdottir}
\email[B. G. Bodmann]{bgb@math.uh.edu}

    
 
\thanks{This work was supported by the National Science Foundation grant ATD-1925352. The group from the MIT Lincoln Labs is gratefully acknowledged for the challenge problem in the 2019 ATD program meeting motivating the research presented here.}

    \begin{abstract}
    This paper develops an adaptive version of Mallat's scattering transform for signals on graphs.
    The main results are norm bounds for the layers of the transform, obtained from a version of
    a Beurling-Deny inequality that permits to remove the nonlinear steps in the scattering transform. 
   Under statistical assumptions on the input signal, the norm bounds can be refined. The concepts presented here are illustrated with an application to traffic counts which exhibit characteristic daily and weekly periodicities. Anomalous traffic patterns which deviate from
   these expected periodicities produce a response in the scattering transform.
       \end{abstract}

\maketitle

\section{Introduction}

The results discussed here demonstrate features of a scattering transform for signals on graphs. 
The ideas we develop are borrowed from Mallat's construction of a scattering transform for
signals on Euclidean spaces, whose
main purpose is to serve as a \emph{non-adaptive} method that generates feature
vectors from signals for classification purposes \cite{mallat2012group,sifre2013rotation,
bruna13-scattering,anden2014deep,oyallon2015deep}. The use of standard methods such as support vector machines
on these feature vectors is successful because the output of the scattering transform behaves well under smooth deformations of the signal.

Signals on graphs have recently received growing attention in the literature \cite{henaff15-deepgraph,ShuRV16,defferrard17-cnngraphs,HecCQ17}, because many types of data
are not regularly structured. For example, climate data is acquired at irregularly spaced weather stations around the globe, 
traffic is measured at distinct points in a street map, and  internet traffic may be viewed as
data being transmitted between nodes that are scattered across the world. 

When analyzing signals on graphs, standard techniques from time-\-fre\-quen\-cy analysis are not as
immediately available as in the case of signals on Euclidean spaces. Chen and Mallat developed a scattering transform
on graphs based on Haar wavelets, see \cite{chen2014unsupervised}, but an implementation of this structure with 
more regularity needs tools from
graph signal processing, in particular wavelets on graphs \cite{coifman2006diffusionmaps,coifman2006diffusion}.
Perhaps the closest design of a scattering transform for signals on graphs was presented by Zou and Lerman \cite{giladpaper},
who showed properties that mimic the behavior of Mallat's construction on Euclidean spaces.

In this paper, we also use ideas from graph signal processing \cite{hammond2009wavelets,ShuRV16} to develop a scattering transform, whereby
the eigenvalues of a Laplacian play the role of (squared) frequencies,
and the eigenvectors of the Laplacian then give rise to a type of Fourier transform. This permits us
to develop high and low-pass filters that are somewhat analogous to Mallat's convolution with a wavelet.
Here, we are content to have a pair of high and low pass filters based on the heat kernel 
generated by a modified graph Laplacian. A main difference to the work by Zou and Lerman and to the papers
of Mallat and co-authors is that we exploit the freedom to choose the Laplacian in a signal-adaptive way.
This means, the resulting transform can learn and adapt to signal characteristics. With additional freedom in the design of the transform,
one may hope to get closer to the level of performance demonstrated by less structured neural networks 
\cite{lecun15-deeplearning,henaff15-deepgraph,bronstein17-geomdeeplearn,gama18-gnnarchit}

The use of the heat kernel rests on a special property that it confers on the transform presented here; it
is positivity preserving \cite{simon,bratteli}, which is instrumental for
the derivation of norm bounds in the scattering transform. More precisely, we can show
that the norm of a signal decays exponentially as the layer increases. This is analogous to a result derived by
Waldspurger \cite{waldspurger} for Mallat's scattering transform based on analytic wavelets, and a bound
by Zou and Lerman for graph scattering \cite{giladpaper}. However, in contrast to this latter result, which 
depends explicitly on the size of the graph and deteriorates as the graph grows, our norm bounds only depend 
on the graph geometry through
the spectrum of the graph Laplacian.

The adaptivity of the scattering transform presented here is intended as a step towards provable performance guarantees. Indeed, in the context of statistics,
with model assumptions on a class of random graph signals, we can derive guarantees on the behavior of
the scattering transform as a goodness-of-fit test for noisy data on graphs.

The motivation for the type of scattering transform we develop is the problem of finding anomalous behavior in time series with
typical underlying periodicities. 
While it may seem that the structure of time series is simply given by the temporal order,
periodicities can motivate organizing the data in a more complex way, for example making 
points in time adjacent when they differ by typical periods for which periodicities in the data are expected. In the concrete application discussed here, traffic counts are evaluated. These counts can be expected to show daily, weekly and yearly periodicities that could be included in
models and analysis. We use a stochastic model to estimate parameters governing the traffic counts. This model can be described as a multiplicative random walk with log-normal distributions. Once the parameters are estimated, we generate the output of the scattering transform and
evaluate the goodness of fit. The application is described and illustrated with plots and images.

The main contributions of this paper are the following: 
\begin{itemize}
\item We develop a scattering transform based on heat semigroups whose generator can be chosen within a family of Laplacians that depend on the data,
thus the transform is adaptive, but retains similar properties as the transforms described by Zou and Lerman \cite{giladpaper}.
\item We establish that the norm for the output of the $k$-th layer of the scattering transform decays exponentially in $k$. The key to
unlocking deep network behavior is a version of a Beurling-Deny inequality \cite{simon}. The norm bounds are refined with statistical assumptions on the input signal. If the input is a multiplicative random walk on a graph with a log-normal distribution at each vertex, then
the norm of the output concentrates for each layer. The result on norm decay depending on characteristics of the input signal
is similar to the objective of the study of Mallat's scattering transform by Wiatowski, Grohs, and B{\"o}lcskei \cite{WiaGB17,
wiatowski2018mathematical}, where
regularity assumptions lead to refined norm bounds in comparison with Waldspurger's general estimates.
\item The theoretical concepts are illustrated with an application to traffic counts \cite{CalTrans} and anomaly detection.
\end{itemize}

The remainder of this  paper is structured as follows: 
In the next section, we introduce terminology and fix notation. The scattering transform is introduced in Section~\ref{sec:scatter}.
We establish norm bounds similar to known results for the non-adaptive scattering transform.
In Section~\ref{sec:stat}, we refine the norm bounds with assumptions on the signal, given by a stochastic process that has similarities with geometric Brownian motion. Finally, we illustrate the concepts with an application to traffic counts in Section~\ref{sec:traffic}

\section{Preliminaries}\label{sec:prelim}

We begin by fixing terminology and notation.
A \emph{graph} $\Gamma = (V,E)$ is given by a finite vertex set $V$ and an edge set $E$ containing unordered pairs of $V$, subsets containing two vertices. This is also referred to as a simple, undirected finite graph in the literature.
An \emph{oriented graph} is described by a vertex set $V$ and an edge set $E$, for which $E$ contains \emph{ordered} pairs 
of vertices. When \emph{choosing an orientation} for a graph, we replace each element $\{i,j\} \in E$ by an ordered pair $(i,j)$.
The ordering can be arbitrary or determined by additional structural input, such as an ordering relation on the vertices.
When considering a directed graph, we also speak of an \emph{edge without orientation} when passing from $(i,j) \in E$ to $\{i,j\}$.  
Two edges are \emph{adjacent} if they have a vertex in common. 
A graph is \emph{connected} if any two vertices in $V$ appear in a sequence of vertices such that each pair of consecutive
elements in this sequence forms an edge. A directed graph is \emph{weakly connected}
if any two vertices appear in a sequence of adjacent edges without orientation.
 
The Hilbert space $\ell^2(V)$ is the space of all real-valued functions $f: V \to \mathbb R$, equipped with the canonical 
inner product that associates $\langle f, g\rangle = \sum_{j \in V} f(j) g(j)$ with $f, g \in \ell^2(V)$. The norm induced
by the inner product is written as usual, $\|f\| = (\langle f,f\rangle)^{1/2}$.
The \emph{graph Laplacian} $\Delta$ is the self-adjoint operator corresponding to the quadratic form defined by 
$Q(f) = \sum_{\{i,j\} \in E} |f(i)-f(j)|^2$ for $f \in \ell^2(V)$. 

In the following, we modify the (standard) graph Laplacian with additional elements that permit us to adapt it to given data.
The main insight is that key properties of the Laplacian remain intact. In particular, with the modification we choose, 
the functions in the kernel of the Laplacian for a weakly connected, directed graph form a one-dimensional subspace as usual.

Given a directed graph $(V,E)$ and two functions $w : E \to \mathbb{R}^+\setminus\{0\}$ and $a: E \to \mathbb R$, we let $\Delta_{w,a}$
be the operator on $\ell^2(V)$ corresponding to the quadratic form $f \mapsto Q_{w,a}(f) = \sum_{(i,j) \in E} w(i,j) |e^{a(i,j)} f(i)-f(j)|^2 $.  In this context, we call the function $w$ a \emph{weight} on the edges and $a$ a \emph{drift}.
If there is $\phi: V \to \mathbb R$
such that for each $(i,j) \in E$, $a(i,j) = \phi(j)-\phi(i)$, then we say that $a$ is a \emph{gradient field} and $\phi$ is  a \emph{potential}. 
For the following results, we recall that all graphs and directed graphs considered here are finite, so any function $f: V \to \mathbb R$ is in
$\ell^2(V)$.

\begin{prop} If $(V,E)$ is a directed graph, $w: E \to \mathbb R^+\setminus\{0\}$ a weight function and $a: E \to \mathbb R$ a gradient field, and
the graph is weakly connected, then $\Delta_{w,a}$ has a one-dimensional kernel.
\end{prop}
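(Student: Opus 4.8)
The plan is to identify the kernel of $\Delta_{w,a}$ with the nullspace of the quadratic form $Q_{w,a}$, and then exploit the gradient-field structure of $a$ through a gauge transformation. First I would observe that $Q_{w,a}$ is manifestly nonnegative, being a sum of squares with strictly positive coefficients $w(i,j)$, so the associated self-adjoint operator $\Delta_{w,a}$ is positive semidefinite and satisfies $\langle \Delta_{w,a} f, f\rangle = Q_{w,a}(f)$. Writing $\Delta_{w,a} = B^* B$ for its positive square root $B$, we have $\langle \Delta_{w,a} f, f\rangle = \|Bf\|^2$, so $\Delta_{w,a} f = 0$ is equivalent to $Q_{w,a}(f) = 0$. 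Hence it suffices to characterize the functions that annihilate the form.

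Next, since every weight is strictly positive and each summand $|e^{a(i,j)} f(i) - f(j)|^2$ is nonnegative, the vanishing $Q_{w,a}(f) = 0$ holds if and only if $e^{a(i,j)} f(i) = f(j)$ for every directed edge $(i,j) \in E$. This is the crucial reduction: the kernel condition becomes a system of pointwise constraints, one per edge.

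The key algebraic step is the gauge transformation. Substituting the gradient-field hypothesis $a(i,j) = \phi(j) - \phi(i)$ into each edge constraint and multiplying through by $e^{-\phi(j)}$ rewrites $e^{\phi(j)-\phi(i)} f(i) = f(j)$ as $e^{-\phi(i)} f(i) = e^{-\phi(j)} f(j)$. Setting $g := e^{-\phi} f$, the constraints say exactly that $g(i) = g(j)$ across every edge. Weak connectivity then forces $g$ to be constant on $V$: any two vertices are joined by a chain of edges taken without orientation, and $g$ is unchanged along each, so $g \equiv c$ for some constant $c$. Therefore $f = c\, e^{\phi}$, and since $e^{\phi}$ is nowhere zero, the kernel is precisely the one-dimensional span of $e^{\phi}$.

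I do not anticipate a serious obstacle. The only point requiring care is the passage from $Q_{w,a}(f) = 0$ to $\Delta_{w,a} f = 0$, which rests on positive semidefiniteness rather than strict positivity. A secondary subtlety is that the edge constraints are indexed by oriented edges, so I would check that replacing $(i,j)$ by its underlying unoriented edge $\{i,j\}$ is harmless for the connectivity argument, which it is because $g(i) = g(j)$ is a symmetric relation.
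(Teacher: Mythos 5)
Your proof is correct and takes essentially the same route as the paper's: reduce $\Delta_{w,a}f=0$ to the vanishing of every edge term of $Q_{w,a}$, use the gradient-field hypothesis to conclude that $e^{-\phi}f$ is constant across each edge, and invoke weak connectivity to get $f = c\,e^{\phi}$. Your explicit gauge transformation and your justification of the passage from $\Delta_{w,a}f=0$ to $Q_{w,a}(f)=0$ merely spell out steps the paper leaves implicit.
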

\begin{proof}
By assumption, $a$ is a gradient field, so there is $\phi: V \to \mathbb{R}$ and for each $(i,j) \in E$, $e^{a(i,j)} = e^{\phi(j)}/e^{\phi(i)}$.
If $f$ is annihilated by $\Delta_{w,a}$, then for each $(i,j) \in E$, $e^{\phi(j)} f(i) = e^{\phi(i)} f(j)$. This determines the value of $f(j)$ given $f(i)$. Starting from the value $f(i_0)$
at any fixed vertex $i_0$, based on the connectedness assumption, we can then solve for $f(j) = f(i_0) e^{-\phi(i_0)} e^{\phi(j)}$ at any $j \in V$. 
This shows that $\Delta_{w,a}$ has a one-dimensional kernel.
\end{proof}

Using the Courant-Fischer variational characterization of eigenvalues for Hermitian operators \cite{HornJohnson}, we conclude with a consequence for the smallest non-zero eigenvalue of $\Delta_{w,a}$, obtained by minimizing the quadratic form $Q_{w,a}$ subject to an orthogonality constraint.

\begin{cor}
Let $(V,E)$ be a directed, weakly connected graph, $w: E \to \mathbb R^+\setminus\{0\}$ a weight function and $a: E \to \mathbb R$ a gradient field with potential $\phi: V \to \mathbb R$, then the smallest non-zero eigenvalue of $\Delta_{w,a}$ is given by
$$
   \lambda_1(\Delta_{w,a}) = \min \{ \langle \Delta_{w,a} f, f\rangle: f \in \ell^2(V), \|f\|=1, \sum_{i \in V} f(i) e^{\phi(i)} = 0\} \, .
$$
\end{cor}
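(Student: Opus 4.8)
The plan is to recognize this as the standard variational characterization of the second-smallest eigenvalue of a positive semidefinite self-adjoint operator, once the kernel has been identified explicitly. First I would record that $\Delta_{w,a}$ is positive semidefinite: since $w(i,j) > 0$ for every edge, each summand of $Q_{w,a}(f) = \sum_{(i,j)\in E} w(i,j)\,|e^{a(i,j)}f(i)-f(j)|^2$ is nonnegative, so $\langle \Delta_{w,a} f, f\rangle = Q_{w,a}(f) \ge 0$ for all $f \in \ell^2(V)$. Consequently the smallest eigenvalue of $\Delta_{w,a}$ is $\lambda_0 = 0$, and by the preceding proposition its eigenspace, namely the kernel, is one-dimensional. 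In particular $\lambda_0 = 0$ is a simple eigenvalue, which is what makes the orthogonality constraint below clean.

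Next I would pin down the kernel concretely, reusing the computation from the proof of the proposition. There it is shown that any $f$ annihilated by $\Delta_{w,a}$ satisfies $f(j) = f(i_0)\,e^{-\phi(i_0)}\,e^{\phi(j)}$ for all $j \in V$, so the kernel is spanned by the single function $g$ defined by $g(i) = e^{\phi(i)}$. The key bookkeeping step is then to observe that the linear constraint appearing in the statement is exactly orthogonality to this kernel, since $\langle f, g\rangle = \sum_{i\in V} f(i)\,e^{\phi(i)}$, so $\sum_{i \in V} f(i) e^{\phi(i)} = 0$ is precisely $f \perp \ker\Delta_{w,a}$.

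With these two facts in hand, I would invoke the Courant-Fischer min-max theorem for $\Delta_{w,a}$ on the finite-dimensional space $\ell^2(V)$. Ordering the eigenvalues as $0 = \lambda_0 \le \lambda_1 \le \cdots$ and using that $\lambda_0$ is simple with eigenvector $g$, the theorem yields $\lambda_1 = \min\{\langle \Delta_{w,a} f, f\rangle : \|f\|=1,\ f \perp \ker\Delta_{w,a}\}$, because minimizing the Rayleigh quotient over the orthogonal complement of the lowest eigenspace returns the second-smallest eigenvalue. Substituting the explicit condition $\sum_{i\in V} f(i)\,e^{\phi(i)} = 0$ then gives the claimed formula. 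I expect no serious obstacle; the only point requiring care is the identification of the kernel with $\lspan\{g\}$ and the accompanying observation that the stated constraint is $\langle f, g\rangle = 0$ rather than the familiar $\sum_i f(i) = 0$ for the unweighted Laplacian. The drift $a$ and weight $w$ deform the constant kernel vector into the exponential profile $e^{\phi(\cdot)}$, and the corollary's constraint is tailored to match that deformation.
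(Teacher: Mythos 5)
Your proposal is correct and matches the paper's intended argument: the paper offers no separate proof, merely noting that the identity follows from the Courant--Fischer characterization applied with the orthogonality constraint, and your write-up supplies exactly the missing details (positive semidefiniteness, identification of the kernel with the span of $e^{\phi}$ from the preceding proposition, and recognition of $\sum_{i\in V} f(i)e^{\phi(i)}=0$ as orthogonality to that kernel). Nothing further is needed.
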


Next, we study the properties of the contraction semigroup generated by the negative Laplacian $-\Delta_{w,a}$. We suppress the dependence on
$w$ and $a$ and define for $f \in \ell^2(V)$ and $t \ge 0$
$$
   G_t f  = e^{-t \Delta_{w,a}} f \, ,
$$
then $\{G_t \}_{t \ge 0}$ satisfies the semigroup property $G_{t} G_{t'} = G_{t+t'}$ for $t,t' \ge 0$.
Since $\Delta_{w,a}$ is Hermitian and only has non-negative eigenvalues, $\| G_t  f\| \le \|f\|$. We can say more
based on the concrete definition of $\Delta_{w,a}$. From the expression for the quadratic form, for any $f \in \ell^2(V)$,
the inequality
$\langle \Delta_{w,a} |f|, |f| \rangle \le \langle \Delta_{w,a} f, f \rangle$ holds, where $|f|(i)\equiv |f(i)|, i \in V$. This inequality is known to be equivalent to $\{G_t\}_{t \ge 0}$
being a positivity preserving semigroup \cite{bratteli,simon}, meaning that for $t \ge 0$ and any $f \in \ell^2(V)$ with $f(i) \ge 0$ for each $ i \in V$, we have
$ G_t f (i) \ge 0$ for each $i \in V$. Because this is a central property, we give a separate proof that the semigroup is positivity preserving.
To facilitate the proof, we introduce additional operators.
Let $M_w$ be the multiplication operator corresponding to $w$ on $\ell^2(E)$, so $M_w h(i,j) = w(i,j) h(i,j)$ for any $(i,j) \in E$, and let 
$$
   D_a: \ell^2(V) \to \ell^2(E), (D_a f)(i,j) = f(j)-e^{a(i,j)} f(i) \, .
$$
 This permits us to write the Laplacian as a composition of these operators,
$$
   \Delta_{w,a}  = D_a^* M_w D_a \, .
$$

\begin{prop}
Let $t \ge 0$, $(V,E)$ be a directed graph, $\Delta_{w,a}$ be the Laplacian associated with the functions 
$w: E \to \mathbb R^+\setminus\{0\}$  and 
$a: E \to \mathbb R$, then $\{e^{-t \Delta_{w,a}}\}_{t \ge 0}$ is positivity preserving.
\end{prop}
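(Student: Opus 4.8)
The plan is to give a direct, self-contained argument by reading off the matrix entries of $\Delta_{w,a}$ in the standard basis $\{\delta_k\}_{k \in V}$ of $\ell^2(V)$ and then invoking the elementary fact that the exponential of a matrix with nonnegative off-diagonal entries is entrywise nonnegative. First I would compute the adjoint $D_a^*: \ell^2(E) \to \ell^2(V)$ explicitly from the defining relation $\langle D_a^* h, f\rangle = \langle h, D_a f\rangle$, which yields, for each vertex $k$,
$$
   (D_a^* h)(k) = \sum_{i:(i,k)\in E} h(i,k) - \sum_{j:(k,j)\in E} e^{a(k,j)} h(k,j) \, .
$$
Substituting $h = M_w D_a f$ and collecting terms, I would extract the matrix of $\Delta_{w,a} = D_a^* M_w D_a$.

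The heart of the argument is the sign structure of this matrix. The diagonal entry attached to a vertex $k$ is $\sum_{i:(i,k)\in E} w(i,k) + \sum_{j:(k,j)\in E} e^{2a(k,j)} w(k,j)$, which is manifestly nonnegative. For $l \neq k$, the coefficient of $f(l)$ in $(\Delta_{w,a} f)(k)$ receives only the contribution $-w(l,k)e^{a(l,k)}$ when $(l,k)\in E$ and $-w(k,l)e^{a(k,l)}$ when $(k,l)\in E$, and each of these is $\leq 0$ because $w > 0$ and $e^{a} > 0$. Thus every off-diagonal entry of $\Delta_{w,a}$ is nonpositive.

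With the sign structure in hand, the remainder is routine. I would choose a scalar $s$ at least as large as the largest diagonal entry of $\Delta_{w,a}$, so that $sI - \Delta_{w,a}$ has all entries nonnegative. Writing $e^{-t\Delta_{w,a}} = e^{-ts}\, e^{t(sI - \Delta_{w,a})}$ and expanding the second factor as the convergent series $\sum_{n \geq 0} t^n (sI-\Delta_{w,a})^n / n!$, I note that every power of an entrywise-nonnegative matrix is again entrywise nonnegative, so the sum is nonnegative, and multiplication by the positive scalar $e^{-ts}$ preserves this. Hence $e^{-t\Delta_{w,a}}$ has nonnegative matrix entries, which is exactly the statement that it sends nonnegative functions on $V$ to nonnegative functions, i.e.\ the semigroup is positivity preserving.

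The main obstacle is the bookkeeping in the first step: computing $D_a^*$ correctly and composing the three operators so that the contributions to each off-diagonal entry are tracked without sign errors, the delicate point being that each ordered edge $(i,j)$ enters asymmetrically through the drift factor $e^{a(i,j)}$. Everything after the off-diagonal sign computation is standard. As a remark, one could instead verify the Beurling--Deny inequality $\langle \Delta_{w,a}\abs{f},\abs{f}\rangle \leq \langle \Delta_{w,a} f, f\rangle$ directly: since $w(i,j) > 0$ and $e^{a(i,j)} > 0$, it suffices to check the term-by-term bound $\abs{e^{a(i,j)}\abs{f(i)} - \abs{f(j)}}^2 \leq \abs{e^{a(i,j)} f(i) - f(j)}^2$, which after expansion reduces to $f(i)f(j) \leq \abs{f(i)}\,\abs{f(j)}$, and then appeal to the cited equivalence with positivity preservation; the matrix-exponential route, however, has the advantage of being fully self-contained.
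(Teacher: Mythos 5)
Your proof is correct, and it reaches the conclusion by a genuinely different (and arguably cleaner) route than the paper. Both arguments ultimately rest on the same structural fact, namely that $-\Delta_{w,a}$ has nonnegative off-diagonal entries in the basis $\{\delta_k\}_{k\in V}$ (the paper records this as $\langle -\Delta_{w,a}\delta_i,\delta_j\rangle = w(i,j)e^{a(i,j)}>0$ for $(i,j)\in E$, and your explicit computation of $D_a^*$ and of the matrix of $D_a^*M_wD_a$ confirms it, including the diagonal entries). From there the paper proceeds by a local-in-time analysis: it bounds the diagonal entries of $e^{-t\Delta_{w,a}}$ from below for small $t$, argues that for adjacent vertices the derivative of the matrix entry at $t=0$ is positive, handles pairs at graph distance $d>1$ by identifying the first nonvanishing term $(-t)^{d}\langle \Delta_{w,a}^{d}\delta_i,\delta_j\rangle$ of the power series via an induction over paths, and finally bootstraps from small $t$ to all $t\ge 0$ using $G_t=G_{t/n}^n$. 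Your uniformization trick $e^{-t\Delta_{w,a}}=e^{-ts}\,e^{t(sI-\Delta_{w,a})}$ with $s$ dominating the diagonal replaces all of that with a single observation: every term of the exponential series of the entrywise-nonnegative matrix $sI-\Delta_{w,a}$ is entrywise nonnegative. This avoids the leading-order and induction arguments entirely, as well as the need to choose $t$ small uniformly over all vertex pairs before invoking the semigroup property, so it is both shorter and less delicate. What the paper's route buys in exchange is some quantitative information (which matrix entries are strictly positive for small $t$, tied to graph distance), but that is not needed for the proposition. Your closing remark about verifying the Beurling--Deny inequality $\langle\Delta_{w,a}|f|,|f|\rangle\le\langle\Delta_{w,a}f,f\rangle$ termwise is also sound and matches the equivalence the paper cites but deliberately does not use as its proof.
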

\begin{proof}
We have  for $i \in V$ and any $n \in \mathbb N$, $\langle (- \Delta_{w,a})^n \delta_i, \delta_i \rangle \ge - \|\Delta_{w,a}\|^n $ where $\|\Delta_{w,a}\|$ is the operator norm
of $\Delta_{w,a}$,  the largest eigenvalue of this Hermitian operator.
Using this estimate to get lower bounds for the terms in the power series expansion of $e^{-t \Delta_{w,a}}$ gives
$$
  \langle e^{-t \Delta_{w,a}} \delta_i , \delta_i \rangle \ge 1 - (e^{t \|\Delta_{w,a}\| }  - 1) \, ,
$$
which is seen to be positive for sufficiently small $t$, such as $t<\ln 2/\|\Delta_{w,a}\|$.
Moreover, if $\{i,j\} \subset V$,  then  either $(i,j) \in E$
and
$$
   \langle - \Delta_{w,a} \delta_i, \delta_j \rangle = w(i,j) e^{a(i,j)} 
$$
or $(i,j) \not \in E$ and then $\langle - \Delta_{w,a} \delta_i, \delta_j \rangle = 0$ .
Again, based on the power series expansion of $G_t = e^{-t \Delta_{w,a}}$,
we have $\frac{d}{dt}|_{t=0}  \langle G_t \delta_i, \delta_j \rangle = \langle - \Delta_{w,a} \delta_i, \delta_j \rangle > 0$
if $(i,j) \in E$, so 
 for all
sufficiently small $t$, $\langle G_t \delta_i, \delta_j\rangle \ge 0$. 
If $i$ and $j$ are at a distance $d(i,j)>1$,  then the first non-zero term in the power series
for $\langle G_t \delta_i , \delta_j\rangle $ is 
$(-t)^{d(i,j)} \langle \Delta_{w,a}^{d(i,j)} \delta_i , \delta_j\rangle $
and the only contribution comes from paths of length $d(i,j)$ between $i$ and $j$. Factoring this product into
 $ \sum_l -t \langle \Delta_{w,a} \delta_i, \delta_l \rangle (-t)^{d(i,j)-1} \langle \Delta_{w,a}^{(d(i,j)-1} \delta_l, \delta_j \rangle \ , $
 then the summation over $l$ only includes vertices with $(i,l) \in E$ and $l$ and $j$ having distance $d(i,j)-1$. Using induction then shows that the $d$-th power of the negative Laplacian has positive off-digonal entries $\langle (- \Delta_{w,a})^d \delta_i, \delta_j\rangle$
 if $d(i,j)= d$. Exhausting all pairs of vertices shows that the semigroup is positivity preserving for all sufficiently small $t$.
Now using the semigroup 
property, $G_t = G_{t/n}^n$ and choosing $n$ sufficiently large gives that $G_t$ is positivity preserving
for any $t \ge 0$.
\end{proof}

\section{A scattering transform based on heat semigroups}\label{sec:scatter}

In this section, we introduce and discuss properties of a scattering transform associated with $\Delta_{w,a}$, in the spirit of
Mallat's design of a scattering transform based on wavelets. In our setting of functions on graphs, there is typically no translation invariance
as in the construction of wavelets for $L^2(\mathbb{R})$. Instead, we use the heat semigroup generated by the Laplacian 
we introduced in order to define high and low-pass components of a function.
Following Mallat's idea, we combine this separation of frequency content with a
non-linear local operation, which is norm preserving. This second step helps to identify singular features in the function.
The main purpose of this definition is to illustrate the idea without ancillary bells and whistles. The typical form of the scattering 
transform would involve several frequency bands and possibly a rectifying linear unit instead of  the use of the absolute value here.
It is left to the reader to make this construction more sophisticated if needed.

\begin{defn}
Given $t>0$, weights $w: E \to \mathbb{R}^+\setminus \{0\}$ and $a: E \to \mathbb R$ and the associated Laplacian $\Delta_{w,a}$, 
we let
$$
   T_t = e^{-t \Delta_{w,a}/2}
$$
and 
$$
  S_t = (I- e^{-t \Delta_{w,a}})^{1/2} \, .
$$
For any $f \in \ell^2(V)$, we let
$ f_0=0 , g_0 = f$, $f_1 = T_t |g_0|$, $g_1 = S_t |g_0|$, and successively define for $k \in \mathbb N$
$f_{k+1} = T_t |g_k|$ as well as $g_{k+1} = S_t |g_k|$.
\end{defn}

We deduce simple properties of the sequence $(f_k,g_k)_{k=0}^\infty$. Although $f_k$ and $g_k$ need not be orthogonal,
a Pythagoras identity holds for this pair of functions obtained from $g_{k-1}$.

\begin{prop}
Let $t>0$,  $T_t$ and $S_t$ associated with $w,a$ as above, and $f \in \ell^2(V)$, then for each $k \in \mathbb{N}$,
if $f_k$ and $g_k$ are chosen as in the preceding definition, the Pythagoras identity
$$
    \|f_k\|^2 + \|g_k\|^2 = \|g_{k-1}\|^2 \, 
$$ 
holds.
\end{prop}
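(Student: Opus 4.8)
The plan is to reduce the identity to the algebraic relation $T_t^2 + S_t^2 = I$ together with the fact that taking the pointwise absolute value preserves the $\ell^2$-norm. First I would record that both $T_t = e^{-t\Delta_{w,a}/2}$ and $S_t = (I - e^{-t\Delta_{w,a}})^{1/2}$ are self-adjoint: they are obtained from the self-adjoint operator $\Delta_{w,a}$ by functional calculus (equivalently, by applying real-valued functions to $\Delta_{w,a}$ in its spectral decomposition), so $T_t^* = T_t$ and $S_t^* = S_t$. In particular $T_t^* T_t = T_t^2 = e^{-t\Delta_{w,a}}$ and $S_t^* S_t = S_t^2 = I - e^{-t\Delta_{w,a}}$.

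Next I would fix $k \in \mathbb{N}$ and write $h = |g_{k-1}|$, so that by definition $f_k = T_t h$ and $g_k = S_t h$. Expanding each norm through the inner product and using self-adjointness gives
$$
   \|f_k\|^2 = \langle T_t h, T_t h\rangle = \langle T_t^2 h, h\rangle, \qquad
   \|g_k\|^2 = \langle S_t h, S_t h\rangle = \langle S_t^2 h, h\rangle \, .
$$
Adding these and invoking the spectral identity $T_t^2 + S_t^2 = e^{-t\Delta_{w,a}} + (I - e^{-t\Delta_{w,a}}) = I$ collapses the sum to
$$
   \|f_k\|^2 + \|g_k\|^2 = \langle (T_t^2 + S_t^2) h, h\rangle = \langle h, h\rangle = \|h\|^2 \, .
$$

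Finally I would observe that $h = |g_{k-1}|$ satisfies $\|h\|^2 = \sum_{j \in V} |g_{k-1}(j)|^2 = \|g_{k-1}\|^2$, since the pointwise absolute value leaves each squared coordinate unchanged. This yields the claimed identity $\|f_k\|^2 + \|g_k\|^2 = \|g_{k-1}\|^2$. There is no substantive obstacle here; the only point requiring a moment's care is the cancellation $T_t^2 + S_t^2 = I$, which is exactly why the operators were defined through the square root in $S_t$ and the half-time in $T_t$. This partition-of-unity relation is the mechanism that makes the pair $(f_k, g_k)$ behave like an orthogonal splitting of $g_{k-1}$ even though $f_k$ and $g_k$ need not themselves be orthogonal.
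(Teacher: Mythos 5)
Your proof is correct and follows essentially the same route as the paper: both arguments expand $\|T_t|g_{k-1}|\|^2$ and $\|S_t|g_{k-1}|\|^2$ as quadratic forms via self-adjointness, use the cancellation $T_t^2 + S_t^2 = e^{-t\Delta_{w,a}} + (I - e^{-t\Delta_{w,a}}) = I$, and conclude with $\||g_{k-1}|\| = \|g_{k-1}\|$. Your write-up is slightly more explicit about the functional-calculus justification, but there is no substantive difference.
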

\begin{proof}
We verify that for $g_0=f$,
$\|f_1\|^2 = \|T_t  |f| \|^2 = \langle e^{-t \Delta_{w,a} }  |f|,  |f| \rangle$
while 
$ \|g_1\|^2 = \|S_t |f| \|^2 = \langle (I - e^{-t \Delta_{w,a} }) |f|, |f|\rangle \, ,
$
hence $\|f\|^2 = \| |f| \|^2 = \|f_1\|^2 + \|g_1\|^2$.
Now replacing $g_0$ by $g_{k-1}$ and $f_1$ and $g_1$ by $f_k$ and $g_k$, respectively, 
the same arithmetic gives the claimed identity.
\end{proof}

Together with the positivity preserving property, we can estimate the norm of each $g_k$ further,
in a type of sequence of Beurling-Deny inequalities.
\begin{prop}
Let $t>0$, $T_t$, $S_t$, $w$ and $a$ as in the preceding definition, then for $k \in \mathbb{N}$,
$$
  \| g_k \| = \| S_t |g_{k-1} | \| \le \| S_t g_{k-1} \|  \, .
$$
\end{prop}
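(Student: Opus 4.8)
The plan is to reduce the desired inequality to a comparison of quadratic forms of the heat operator $G_t = e^{-t\Delta_{w,a}}$ and then invoke the positivity preserving property established in the preceding proposition. Writing $h = g_{k-1}$ and recalling that $S_t^2 = I - e^{-t\Delta_{w,a}} = I - G_t$, I would first record the two identities
$$ \|S_t h\|^2 = \|h\|^2 - \langle G_t h, h\rangle, \qquad \|S_t |h|\|^2 = \||h|\|^2 - \langle G_t |h|, |h|\rangle. $$
Since $\||h|\| = \|h\|$, subtracting these shows that the claim $\|S_t|h|\| \le \|S_t h\|$ is equivalent to the single scalar inequality $\langle G_t |h|, |h|\rangle \ge \langle G_t h, h\rangle$.

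Next I would expand both quadratic forms in the standard basis $\{\delta_i\}_{i \in V}$. Setting $K_{ij} = \langle G_t \delta_i, \delta_j\rangle$, one has
$$ \langle G_t h, h\rangle = \sum_{i,j \in V} K_{ij}\, h(i) h(j), \qquad \langle G_t |h|, |h|\rangle = \sum_{i,j \in V} K_{ij}\, |h(i)|\,|h(j)|. $$
The preceding proposition guarantees that $G_t$ is positivity preserving, and in this finite-dimensional setting that is equivalent to $K_{ij} \ge 0$ for all $i,j \in V$: applying $G_t$ to the non-negative function $\delta_i$ and testing against $\delta_j$ gives $K_{ij} = (G_t \delta_i)(j) \ge 0$, and conversely non-negative entries force non-negative images of all non-negative functions. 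Combining $K_{ij} \ge 0$ with the elementary pointwise bound $|h(i)|\,|h(j)| \ge h(i) h(j)$ term by term yields $\langle G_t |h|, |h|\rangle \ge \langle G_t h, h\rangle$, which is exactly what is needed.

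Because $V$ is finite, every sum above is finite and no analytic subtlety enters; the only substantive input beyond the bookkeeping is the non-negativity of the matrix entries of $G_t$. The hard part, such as it is, is therefore the clean translation of \emph{positivity preserving} into \emph{non-negative matrix entries}, which is painless here precisely because $\{\delta_i\}_{i \in V}$ is simultaneously an orthonormal basis and a family of non-negative functions. I expect no further complications once that equivalence is stated, so the proof reduces to the two displays together with the sign observation $|h(i)|\,|h(j)| \ge h(i)h(j)$.
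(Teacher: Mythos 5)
Your proof is correct and follows essentially the same route as the paper: both reduce the claim, via $S_t^2 = I - G_t$ and $\|\,|h|\,\| = \|h\|$, to the quadratic-form inequality $\langle G_t |h|, |h|\rangle \ge \langle G_t h, h\rangle$ and then invoke the positivity preserving property of $G_t$. The only difference is that you spell out why positivity preservation gives that inequality (non-negative matrix entries $\langle G_t\delta_i,\delta_j\rangle$ plus $|h(i)||h(j)| \ge h(i)h(j)$), a step the paper asserts without elaboration.
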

\begin{proof}
To see this inequality, we note that the positivity preserving property implies
$
  \| T_t |g_k| \|^2 \ge \| T_t g_k \|^2 \, .
$
Using this in conjunction with the Pythagoras identity yields
$$
   \| S_t |g_k | \|^2 = \| g_k \|^2 - \| T_t |g_k| \|^2
   \le \|g_k \|^2 - \| T_t g_k \|^2 = \|S_t g_k \|^2 \, .
$$
Taking the square root gives the desired inequality.
\end{proof}

The Beurling-Deny inequality for $S_t$ permits us to remove the nonlinearity 
in the iteration defining the scattering transform, which provides estimates for the norm of $g_k$,
a first main result.

\begin{thm} Let the graph $(V,E)$ be weakly connected.
Let $w,a$ and $T_t$, $S_t$ be given as above, let $\lambda_{\mathrm{max}}$ be the largest eigenvalue of $\Delta_{w,a}$ and $f \in \ell^2(V)$ give rise to $(f_k,g_k)_{k=0}^\infty$
as described, then for each $k \in \mathbb N$,
$$
  \| g_{k} \| = \| S_t |g_{k-1}| \| \le (1-e^{-t \lambda_{\mathrm{max}}})^{(k-1)/2} \| S_t |g_0| \| \le 
   (1-e^{-t \lambda_{\mathrm{max}}})^{k/2} \| f \| \, . 
$$
\end{thm}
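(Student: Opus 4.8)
The plan is to reduce the whole statement to a single contraction estimate for $S_t$ and then telescope. The engine is the identity $S_t^* S_t = S_t^2 = I - e^{-t\Delta_{w,a}}$, which holds because $I - e^{-t\Delta_{w,a}}$ is Hermitian and positive semidefinite, so its square root $S_t$ is Hermitian. By the spectral theorem for the Hermitian operator $\Delta_{w,a}$, the eigenvalues of $I - e^{-t\Delta_{w,a}}$ are exactly $1 - e^{-t\lambda}$ as $\lambda$ runs over the spectrum of $\Delta_{w,a}$. Since $\lambda \mapsto 1 - e^{-t\lambda}$ is increasing and every eigenvalue satisfies $0 \le \lambda \le \lambda_{\mathrm{max}}$, the top eigenvalue of $S_t^2$ is $1 - e^{-t\lambda_{\mathrm{max}}}$. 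Hence for every $h \in \ell^2(V)$,
$$
  \|S_t h\|^2 = \langle (I - e^{-t\Delta_{w,a}}) h, h\rangle \le (1 - e^{-t\lambda_{\mathrm{max}}})\,\|h\|^2 .
$$
This spectral bound is the only place the graph geometry enters, through $\lambda_{\mathrm{max}}$.

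Next I would record the single-step estimate. For any $j \ge 1$, the preceding Beurling-Deny proposition removes the nonlinearity, giving $\|g_j\| = \|S_t |g_{j-1}|\| \le \|S_t g_{j-1}\|$, and applying the spectral bound to $g_{j-1}$ yields
$$
  \|g_j\| \le \|S_t g_{j-1}\| \le (1-e^{-t\lambda_{\mathrm{max}}})^{1/2}\,\|g_{j-1}\| .
$$
(Equivalently one could apply the spectral bound directly to $|g_{j-1}|$ and use $\||g_{j-1}|\| = \|g_{j-1}\|$; the Beurling-Deny step is what the paper's narrative highlights.) Writing $c := (1-e^{-t\lambda_{\mathrm{max}}})^{1/2}$, the first inequality of the theorem then comes from iterating this estimate from $j=k$ down to $j=2$ and stopping at $g_1$:
$$
  \|g_k\| \le c\,\|g_{k-1}\| \le \cdots \le c^{\,k-1}\,\|g_1\| = (1-e^{-t\lambda_{\mathrm{max}}})^{(k-1)/2}\,\|S_t |g_0|\| ,
$$
since $g_1 = S_t|g_0|$.

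The second inequality is one further application of the same single step, now to the base case: $\|S_t |g_0|\| = \|S_t |f|\| \le c\,\|f\|$, so $c^{\,k-1}\|S_t|g_0|\| \le c^{\,k}\|f\|$, which is the claimed bound $(1-e^{-t\lambda_{\mathrm{max}}})^{k/2}\|f\|$.

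I do not expect a genuine obstacle here; the proof is short once the spectral bound is in hand, so the real work is bookkeeping and consistency checks. The points I would be careful about are: recognizing $S_t^2 = I - e^{-t\Delta_{w,a}}$ and identifying its largest eigenvalue as $1 - e^{-t\lambda_{\mathrm{max}}}$; verifying $0 \le 1 - e^{-t\lambda_{\mathrm{max}}} \le 1$ so that the square roots and exponents are well defined and the map is genuinely contractive for $t>0$; and handling the indexing so that the telescoping matches both displayed inequalities. In particular for $k=1$ the first inequality is the equality $\|g_1\| = \|S_t|g_0|\|$ (empty product $c^0 = 1$), so the argument is valid for every $k \in \mathbb N$.
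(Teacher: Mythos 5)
Your proof is correct and follows essentially the same route as the paper: the Beurling--Deny inequality to strip the absolute value, the spectral bound $\langle S_t^2 h,h\rangle\le(1-e^{-t\lambda_{\mathrm{max}}})\|h\|^2$, and iteration of the resulting one-step contraction. If anything, your final step $\|S_t|g_0|\|\le(1-e^{-t\lambda_{\mathrm{max}}})^{1/2}\|f\|$ is slightly more careful than the paper's, which at that point only invokes $\|S_t g_0\|\le\|g_0\|$ and so, as literally written, accounts for the exponent $(k-1)/2$ rather than the stated $k/2$ in the last bound.
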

\begin{proof}
The estimate follows from the pair of inequalities
$$
  \| S_t |g_{k}| \| \le \| S_t g_k \| = \| S_t^2 |g_{k-1} | \|
  $$
  and 
  $$
    \| S_t^2 |g_{k-1} | \|^2 = \langle (I-T_t^2) S_t |g_{k-1} | , S_t |g_{k-1} |\rangle
    \le  (1-e^{-t \lambda_{\mathrm{max}}}) \| S_t  |g_{k-1}| \|^2 \, ,
  $$
  via the spectral theorem and monotonicity, which combines to
  $$ \| S_t |g_{k}| \| \le (1-e^{-t \lambda_{\mathrm{max}}})^{1/2} \| S_t  |g_{k-1}| \| \, .$$
  Iterating this
  in conjunction with $\|S_t |g_0| \| \le \| S_t g_0 \| \le \|g_0\| = \|f\|$
  gives the claimed norm estimate.
\end{proof}

Because of the decay of the norm and the Pythagoras identity, we obtain the following 
summation identity.

\begin{cor} Under the assumptions of the preceding theorem, for $f \in \ell^2(V)$,
we obtain
$$
  \|f\|^2 = \sum_{k=1}^\infty \|g_k\|^2
$$
and the terms in this series are dominated by a geometric series.
\end{cor}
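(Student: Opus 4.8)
The plan is to prove the identity by telescoping the Pythagoras identity from the preceding proposition and using the decay estimate of the Theorem to discard the boundary term. The Pythagoras identity $\|f_k\|^2+\|g_k\|^2=\|g_{k-1}\|^2$ rearranges to $\|g_{k-1}\|^2-\|g_k\|^2=\|f_k\|^2\ge 0$, so the sequence $(\|g_k\|^2)_{k\ge 0}$ is non-increasing with initial value $\|g_0\|^2=\|f\|^2$. This monotone, telescoping structure is the engine of the argument, and everything else is a matter of controlling the tail and justifying passage to the limit.

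Next I would sum the rearranged identity over $k=1,\dots,N$ and pass to the limit. The Theorem supplies $\|g_N\|\le(1-e^{-t\lambda_{\mathrm{max}}})^{N/2}\|f\|$; since $t>0$ we have $0\le 1-e^{-t\lambda_{\mathrm{max}}}<1$, hence $\|g_N\|^2\to 0$. Thus the boundary term of the telescoping vanishes and the partial sums collapse to $\|f\|^2$, which is exactly the convergent series asserted in the corollary.

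The geometric domination comes essentially for free from the one-step contraction buried in the proof of the Theorem: $\|g_k\|=\|S_t|g_{k-1}|\|\le(1-e^{-t\lambda_{\mathrm{max}}})^{1/2}\|g_{k-1}\|$ for $k\ge 2$, which iterates to $\|g_k\|^2\le(1-e^{-t\lambda_{\mathrm{max}}})^{k-1}\|g_1\|^2$. The summands are therefore dominated by a geometric series of ratio $1-e^{-t\lambda_{\mathrm{max}}}<1$, which both certifies absolute convergence and legitimizes the rearrangements and the interchange of limit and sum performed above.

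The step I expect to be the main obstacle is the precise bookkeeping of which squared norms the telescoping actually returns. The gap $\|g_{k-1}\|^2-\|g_k\|^2$ equals the low-pass energy $\|f_k\|^2$, so a literal telescoping produces $\sum_{k\ge 1}\|f_k\|^2=\|f\|^2$, whereas the statement records the high-pass energies $\|g_k\|^2$; closing this gap requires the supplementary identity $\sum_{k\ge 1}\|g_k\|^2=\sum_{k\ge 1}\|f_k\|^2$, which is the real crux. I would settle it by a direct computation, for instance on the two-vertex path graph with $f=\delta_1$, where the iteration terminates after two steps and the two energies can be compared explicitly; this sanity check pins down the correct index convention before committing to the final form of the identity.
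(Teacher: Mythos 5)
Your telescoping-plus-decay argument is exactly the proof the paper has in mind (the corollary is stated without proof, as an immediate consequence of ``the decay of the norm and the Pythagoras identity''), but the bookkeeping problem you flag at the end is not a loose end you can tidy up: it is fatal to the statement as printed. The supplementary identity $\sum_{k\ge 1}\|g_k\|^2=\sum_{k\ge 1}\|f_k\|^2$ that you hope will close the gap is false, and the sanity check you yourself propose refutes it. Take the single-edge graph $V=\{1,2\}$, $w\equiv 1$, $a\equiv 0$, $f=\delta_1$; the Laplacian has eigenvalues $0$ and $2$ with eigenvectors $(1,1)/\sqrt 2$ and $(1,-1)/\sqrt 2$. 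Then $g_1=\tfrac12(1-e^{-2t})^{1/2}(1,-1)$, so $|g_1|$ is a multiple of the kernel vector $(1,1)$, hence $g_2=S_t|g_1|=0$ and the iteration stops. One computes $\sum_k\|f_k\|^2=\bigl(\tfrac12+\tfrac12e^{-2t}\bigr)+\tfrac12\bigl(1-e^{-2t}\bigr)=1=\|f\|^2$, but $\sum_k\|g_k\|^2=\tfrac12\bigl(1-e^{-2t}\bigr)<\|f\|^2$. Energy exits through the low-pass channel at every layer (and anything that reaches the kernel of $\Delta_{w,a}$ never returns to the $g$-branch), so the high-pass energies alone cannot sum to $\|f\|^2$.

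The correct conclusion of your own argument is therefore $\|f\|^2=\sum_{k=1}^\infty\|f_k\|^2$: partial telescoping gives $\|f\|^2=\sum_{k=1}^N\|f_k\|^2+\|g_N\|^2$, the Theorem kills the boundary term, and both families of terms are geometrically dominated, since $\|f_k\|^2\le\|g_{k-1}\|^2\le(1-e^{-t\lambda_{\mathrm{max}}})^{k-1}\|f\|^2$ and $\|g_k\|^2\le(1-e^{-t\lambda_{\mathrm{max}}})^{k}\|f\|^2$. So your mechanism is sound and coincides with the paper's intended proof; the corollary as printed simply has $g_k$ where it should have $f_k$, the low-pass outputs of the layers --- which is also the form energy conservation takes in Mallat-type scattering transforms, where the $f_k$ are emitted and the $g_k$ are propagated. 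The right move is not to patch the proof toward the printed statement, but to record the corrected statement; your two-vertex computation pins this down decisively.
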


The decay of the norm of $g_k$ for $k \to \infty$ is the justification that in practice
only a limited number of iterations are considered in the scattering transform.

We continue with some perturbation theory with the additional assumption that $a$ is a gradient field.
To set the stage, we compare the quadratic form of $S_t^2$ with that of $\Delta_{w,a}$ and derive a norm bound
for $g_1$.

\begin{lem} Let $(V,E)$ be as above, $w$ and $a$ and $\Delta_{w,a}= D_a^* M_w D_a$ as well as $S_t$ as defined before.
For $f \in \ell^2(V)$, we have
$$
    \| S_t f \| \le \sqrt t \| M_{\sqrt{w}} D_a f \| \, .
$$
\end{lem}
\begin{proof}
By the definition of $S_t$, $\| S_t f \|^2 = \langle S_t^2 f, f\rangle = \langle (I-e^{-t \Delta_{a,w}}) f, f\rangle$.
Now using the operator inequality $I - e^{-t \Delta_{w,a}} \le t \Delta_{w,a}$,
identifying the resulting quadratic form as $\langle \Delta_{a,w} f, f \rangle = \|M_{\sqrt w} D_a f\|^2$, and 
 taking the square root,
 gives the claimed bound.  
\end{proof}

This result permits us to modify the norm bound for $g_1$, which feeds into the successive applications
of $S_t$ and the nonlinear step in the scattering transform.

\begin{prop}
Let $(V,E)$ be as above, $w$ and $a$ and $\Delta_{w,a}= D_a^* M_w D_a$ as well as $S_t$, $f \in \ell^2(V)$
and $(f_k,g_k)_{k = 0}^\infty$ as described, then for $k \in \mathbb N$,
$$\| g_{k} \|  \le \sqrt t (1-e^{-t \lambda_{\mathrm{max}}})^{(k-1)/2} \| M_{\sqrt w} D_a  f  \|. $$
\end{prop}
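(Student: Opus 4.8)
The plan is to recognize that the desired estimate is simply a refinement of the norm-decay theorem above, in which the crude base estimate $\|S_t |g_0|\| \le \|f\|$ is replaced by the sharper bound furnished by the preceding lemma. First I would invoke the iterated inequality established inside the proof of that theorem, namely
$$\| S_t |g_m| \| \le (1-e^{-t \lambda_{\mathrm{max}}})^{1/2} \| S_t |g_{m-1}| \| \, ,$$
and cascade it from $m = k-1$ down to $m = 1$. Since $\|g_k\| = \|S_t |g_{k-1}|\|$ by definition, this yields
$$\| g_k \| = \| S_t |g_{k-1}| \| \le (1-e^{-t \lambda_{\mathrm{max}}})^{(k-1)/2} \| S_t |g_0| \| \, .$$
Because $g_0 = f$, the whole task then reduces to bounding the single quantity $\|S_t |f|\|$ in terms of $\|M_{\sqrt w} D_a f\|$.

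For this last step I would chain together two results already proved. The Beurling-Deny inequality for $S_t$ (the proposition preceding the theorem) gives $\|S_t |f|\| \le \|S_t f\|$, and the lemma immediately above then supplies $\|S_t f\| \le \sqrt t \, \|M_{\sqrt w} D_a f\|$. Substituting the resulting bound $\|S_t |g_0|\| \le \sqrt t \, \|M_{\sqrt w} D_a f\|$ into the cascaded estimate produces precisely the claimed inequality. Thus the proof is essentially an assembly of previously established facts, and I would keep it short.

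The one point that genuinely requires care---and the only place the argument could go astray---is the order in which the two final inequalities are applied. Applying the lemma directly to $|f|$ would produce the term $\|M_{\sqrt w} D_a |f|\|$ rather than the desired $\|M_{\sqrt w} D_a f\|$, and these are in general not equal. The resolution is to strip the absolute value at the level of $S_t$ \emph{first}, via $\|S_t |f|\| \le \|S_t f\|$, and only afterwards apply the lemma to $f$ itself. Equivalently, one could apply the lemma to $|f|$ and then invoke the form-level Beurling-Deny inequality $Q_{w,a}(|f|) \le Q_{w,a}(f)$ recorded in the preliminaries, since $\|M_{\sqrt w} D_a h\|^2 = \langle \Delta_{w,a} h, h \rangle = Q_{w,a}(h)$; either route reaches the same conclusion. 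Beyond this bookkeeping I do not anticipate any substantive obstacle.
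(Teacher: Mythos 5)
Your proposal is correct and follows essentially the same route as the paper's proof: cite the decay estimate from the theorem to reduce to bounding $\|S_t|g_0|\|$, then apply the Beurling--Deny inequality $\|S_t|g_0|\|\le\|S_t f\|$ followed by the lemma $\|S_t f\|\le \sqrt{t}\,\|M_{\sqrt w}D_a f\|$. Your remark about applying the two final inequalities in the right order is a sensible point of care, and the paper's proof indeed applies them in exactly that order.
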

\begin{proof}
We start from the inequality 
$$
  \| g_{k} \| = \| S_t |g_{k-1}| \| \le (1-e^{-t \lambda_{\mathrm{max}}})^{(k-1)/2} \| S_t |g_0| \|
$$
from the  preceding theorem, apply the Beurling-Deny inequality
$ \| S_t |g_0| \| \le \|S_t f \|$ and then the preceding lemma.
\end{proof}

As a consequence, if $a$ is a gradient field with potential $\phi$ and $f$ is close to a multiple of $e^\phi$, then the norm of each $g_k$ is suppressed.

\begin{cor}
Under the assumptions as in the preceding proposition, and if in addition $a$ is a gradient field with potential $\phi$
and $f= c e^{\phi} + h$ with $c \in \mathbb R$ and $h \in \ell^2(V)$, then for $k \in \mathbb N$,
$$
   \| g_{k} \|  \le \sqrt t (1-e^{-t \lambda_{\mathrm{max}}})^{(k-1)/2}     \| M_{\sqrt w} D_a  h  \|.
$$
\end{cor}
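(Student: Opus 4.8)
The plan is to reduce this corollary directly to the preceding proposition by showing that the multiple of $e^\phi$ appearing in $f$ makes no contribution to the quantity $M_{\sqrt w} D_a f$ that controls the bound. The key observation will be that the edge operator $D_a$ annihilates the function $e^\phi$, so that by linearity $D_a f = D_a h$ whenever $f = c e^\phi + h$.

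First I would record the consequence of the gradient-field hypothesis for $D_a$. Since $a(i,j) = \phi(j) - \phi(i)$ for each $(i,j) \in E$, we have $e^{a(i,j)} = e^{\phi(j)}/e^{\phi(i)}$, just as in the proof of the first proposition. Writing $e^\phi$ for the function $i \mapsto e^{\phi(i)}$ and applying $D_a$ then gives, for each edge $(i,j)$,
$$
   (D_a e^\phi)(i,j) = e^{\phi(j)} - e^{a(i,j)} e^{\phi(i)} = e^{\phi(j)} - e^{\phi(j)} = 0 \, ,
$$
so $e^\phi$ lies in the kernel of $D_a$. This is the crux of the whole argument; it also reconciles with the first proposition, since $e^\phi$ is precisely the kernel element of $\Delta_{w,a} = D_a^* M_w D_a$ produced there.

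Next, using linearity of $D_a$ together with $f = c e^\phi + h$, I would conclude $D_a f = c\, D_a e^\phi + D_a h = D_a h$, and hence $M_{\sqrt w} D_a f = M_{\sqrt w} D_a h$. Substituting this identity into the estimate $\|g_k\| \le \sqrt t\,(1-e^{-t\lambda_{\mathrm{max}}})^{(k-1)/2}\,\|M_{\sqrt w} D_a f\|$ from the preceding proposition yields the claimed bound verbatim. I do not expect any genuine obstacle here: the only substantive point is the vanishing of $D_a e^\phi$, which is immediate from the gradient-field structure, after which the result is pure linearity and a direct appeal to the previous proposition.
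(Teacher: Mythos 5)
Your argument is correct and is exactly the one the paper intends (the corollary is stated without proof, but the only step needed is the observation that the gradient-field hypothesis gives $D_a e^\phi = 0$, hence $M_{\sqrt w}D_a f = M_{\sqrt w}D_a h$, after which the preceding proposition applies directly). No gaps.
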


\section{Statistical properties of the scattering transform}\label{sec:stat}

In this section we make the additional assumption that the vertex set $V$ is equipped with a total ordering $<$,
so for any undirected edge $\{i,j\}$, we can assign an orientation such that $(i,j) \in E$ if $i<j$ and otherwise
$(j,i) \in E$. We then say that the graph becomes a directed graph induced by the total order on $V$.
For notational convenience, we also choose $V= \{1,2, \dots, n\}$ and use the order in $\mathbb N$.
%

Next, we make a statistical assumption. Let $f$ be a stochastic process on $V$ with strictly positive values.
We assume that $f(i)=e^{\phi(i) +\nu(i)}$, where $\phi$ is deterministic and $\nu$ sub-exponentially distributed random noise.

\begin{defn}\label{def:multrandwalk}
Let $(V,E)$ be a directed graph whose edges are directed in accordance with the natural order on $V= \{1,2,\dots, n\}\subset \mathbb N$.
If a probability space $(\Omega,\mathcal F, \mathbb P)$ carries a filtration of $\sigma$-algebras $\{\mathcal{F}_i\}_{i \in V}$ indexed by
$V$ such that each $\nu(i)$ is measurable with respect to $\mathcal{F}_i$, each $e^{|\nu(i)|}$ is integrable with respect to $\mathbb P$, and for $j>i$, 
the increment $\nu(j) - \nu(i)$ is independent of $\mathcal{F}_i$ and 
the 
expectation of the multiplicative increment is $\mathbb E[ e^{\nu(j) - \nu(i)} ] = 1 $, then we say that
$\{e^{\nu(i)}\}_{i\in V}$ is a multiplicative random walk indexed by $V$.
\end{defn}

For convenience, we think of the random walk as initialized by an additional value  $\nu(0)=0$, although $0 \not \in V$.
With this notation, we have that for each $i \in \mathbb V$,
$$
    \mathbb E[ e^{\nu(i)} ] = \mathbb E[ e^{\nu(i)-\nu(i-1)} e^{\nu(i-1) - \nu(i-2)} \dots e^{\nu(1)-\nu(0)}  ] = 
    1
$$
by factoring the expected value. Consequently, $\mathbb E [ e^{\phi(i) + \nu(i)} ] = e^{\phi(i)}$,
so $e^{\nu(i)}$ can be thought of as multiplicative noise.

Next, we compute the first two moments of $D_a f$. The first result shows that under the model assumption we make,
the expected value of $f$ is a vector in the kernel of $\Delta_{w,a}$. 

\begin{lemma} If $a$ is a gradient field with potential $\phi$ and $\nu$ the exponent of a multiplicative random walk indexed by $V$ as in Definition~\ref{def:multrandwalk}, 
and $f=e^{\phi+\nu}$,
then for each $i \in V$,
$$
   \mathbb E [ \Delta_{w,a} f (i)]  = 0 \, .
$$
\end{lemma}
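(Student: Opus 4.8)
The plan is to reduce the statement to the single observation that the expected signal $\mathbb{E}[f]$ coincides with the potential vector $e^\phi$, which the first Proposition of Section~\ref{sec:prelim} has already identified as (spanning) the kernel of $\Delta_{w,a}$ in the gradient-field case. Since $\Delta_{w,a}$ is a fixed, deterministic linear operator on the finite-dimensional space $\ell^2(V)$, and each coordinate $f(i) = e^{\phi(i)+\nu(i)}$ is integrable by the standing assumption that $e^{|\nu(i)|}$ is integrable, expectation commutes with the operator, so that $\mathbb{E}[\Delta_{w,a} f] = \Delta_{w,a}\,\mathbb{E}[f]$. The claim therefore follows once I compute $\mathbb{E}[f]$ and recognize it as a kernel element.

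First I would compute, for each $i \in V$, that $\mathbb{E}[f(i)] = \mathbb{E}[e^{\phi(i)+\nu(i)}] = e^{\phi(i)}\,\mathbb{E}[e^{\nu(i)}] = e^{\phi(i)}$, using the identity $\mathbb{E}[e^{\nu(i)}]=1$ already obtained from the multiplicative random walk assumption by telescoping the increments. Hence $\mathbb{E}[f] = e^\phi$ as a vector in $\ell^2(V)$. Because $a$ is a gradient field with potential $\phi$, the proof of the kernel Proposition exhibits every annihilated function as a multiple of $j \mapsto e^{\phi(j)}$, so in particular $\Delta_{w,a}\,e^\phi = 0$. Combining this with the interchange above gives $\mathbb{E}[\Delta_{w,a} f(i)] = 0$ for every $i \in V$.

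A more transparent, coordinatewise route works directly with the factorization $\Delta_{w,a} = D_a^* M_w D_a$. The gradient-field hypothesis gives $(D_a f)(i,j) = f(j) - e^{\phi(j)-\phi(i)} f(i) = e^{\phi(j)}\bigl(e^{\nu(j)} - e^{\nu(i)}\bigr)$, whose expectation vanishes edge by edge since $\mathbb{E}[e^{\nu(j)}] = \mathbb{E}[e^{\nu(i)}] = 1$. As $D_a^* M_w$ is deterministic and linear, this yields $\mathbb{E}[\Delta_{w,a} f] = D_a^* M_w\,\mathbb{E}[D_a f] = 0$, making the role of the model assumption explicit at each edge. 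I do not expect a genuine obstacle here: the only point requiring care is the interchange of expectation with the linear operators, which is routine in this finite-dimensional setting and is justified precisely by the integrability of $e^{|\nu(i)|}$ guaranteeing that each entry of $D_a f$ is integrable.
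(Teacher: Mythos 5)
Your proposal is correct, and your second, coordinatewise route is essentially the paper's own proof: the paper also writes $\Delta_{w,a}=D_a^*M_wD_a$, computes $(D_af)(i,j)=e^{\phi(j)}(e^{\nu(j)}-e^{\nu(i)})$ from the gradient-field hypothesis, shows the expectation vanishes edge by edge, and concludes by linearity of $D_a^*M_w$. The only cosmetic difference there is that the paper factors $e^{\nu(j)}-e^{\nu(i)}=e^{\nu(i)}(e^{\nu(j)-\nu(i)}-1)$ and invokes independence of the increment from $\mathcal{F}_i$ together with $\mathbb{E}[e^{\nu(j)-\nu(i)}]=1$, whereas you use only linearity plus the already-established identity $\mathbb{E}[e^{\nu(i)}]=1$; both are valid, and yours is marginally more economical since that identity is derived in the paper immediately after Definition~\ref{def:multrandwalk}. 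Your lead argument --- identifying $\mathbb{E}[f]=e^\phi$ as a kernel vector of $\Delta_{w,a}$ and commuting the deterministic finite-dimensional operator with the expectation --- is a genuinely different and somewhat cleaner packaging; it makes transparent that the lemma is really the statement that the mean signal lies in $\ker\Delta_{w,a}$, at the cost of obscuring the edge-level cancellation that the paper reuses in the subsequent variance computations. The integrability needed for the interchange is indeed supplied by the assumption that each $e^{|\nu(i)|}$ is integrable, as you note.
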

\begin{proof}
Under the above assumptions,
  $ \mathbb{E}[f(j) / f(i) ] = e^{\phi(j)-\phi(i)} \, .  $
   We deduce that if $a$ is the gradient field with potential $\phi$,
   $$D_a f(i,j) = f(j)- e^{a(i,j)} f(i)= e^{\phi(j)} (e^{\nu(j)}-e^{\nu(i)})  $$
   Now using that 
    the increment $\nu(j) - \nu(i)$ is independent of $\nu(i)$, the expected value is then
   $$
      \mathbb E[(D_a f)(i,j)] = e^{\phi(j)} \mathbb E[ e^{\nu(i)}]  \mathbb E[e^{\nu(j) - \nu(i)}-1] \mathbb =  0 \, .
   $$
   The last identity holds because $\mathbb E[e^{\nu(j) - \nu(i)} ]=1$.
   By linearity, the expected value vanishes after composing with $M_w$ and with $D_a^*$.
   \end{proof}

Next, we wish to exploit the freedom to choose the weight $w$ appropriately.

\begin{lemma}
Let $f=e^{\phi+\nu}$ and assume $\nu$ has the properties as described in Definition~\ref{def:multrandwalk}, then
$$
    \mathbb{E} [ |(D_a f)(i,j)|^2 ]  = e^{2 \phi(j) }\mathbb E [ e^{2\nu(j)}  - e^{2\nu(i)} ] \, ,
$$ 
and setting $w(i,j) = (\mathbb{E} [ |(D_a f)(i,j)|^2 ])^{-1}$ then gives
$$
   \mathbb{E} [  |(M_{\sqrt w}D_a f)(i,j)|^2 ] = 1 \, .
$$
\end{lemma}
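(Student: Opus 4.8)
The plan is to reduce everything to the explicit formula
$$
   (D_a f)(i,j) = e^{\phi(j)}\bigl(e^{\nu(j)} - e^{\nu(i)}\bigr)
$$
that was already established in the proof of the preceding lemma, using that $a$ is the gradient field with potential $\phi$ and hence $e^{a(i,j)} = e^{\phi(j)}/e^{\phi(i)}$. First I would expand the square pointwise,
$$
   |(D_a f)(i,j)|^2 = e^{2\phi(j)}\bigl( e^{2\nu(j)} - 2 e^{\nu(i) + \nu(j)} + e^{2\nu(i)} \bigr),
$$
so that, after pulling out the deterministic factor $e^{2\phi(j)}$ and taking expectations, the first claim reduces to the single identity
$$
   \mathbb{E}[e^{\nu(i) + \nu(j)}] = \mathbb{E}[e^{2\nu(i)}].
$$
Granting this, the cross term contributes $-2\mathbb{E}[e^{2\nu(i)}]$, which combines with the $+\mathbb{E}[e^{2\nu(i)}]$ term to leave exactly $-\mathbb{E}[e^{2\nu(i)}]$, yielding the claimed expression for $\mathbb{E}[|(D_a f)(i,j)|^2]$.

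This identity is the heart of the computation and is where the random-walk structure of Definition~\ref{def:multrandwalk} enters. Since $j>i$, I would factor $e^{\nu(i)+\nu(j)} = e^{\nu(j)-\nu(i)}\,e^{2\nu(i)}$, note that $\nu(i)$ is $\mathcal{F}_i$-measurable while the increment $\nu(j)-\nu(i)$ is independent of $\mathcal{F}_i$, hence independent of $e^{2\nu(i)}$. The expectation therefore factors, and the normalization $\mathbb{E}[e^{\nu(j)-\nu(i)}]=1$ collapses the first factor to $1$, giving precisely $\mathbb{E}[e^{2\nu(i)}]$. Substituting back establishes the first displayed formula of the lemma.

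For the second formula, the key observation is that $w(i,j) = (\mathbb{E}[|(D_a f)(i,j)|^2])^{-1}$ is a \emph{deterministic} scalar, so that $(M_{\sqrt w} D_a f)(i,j) = \sqrt{w(i,j)}\,(D_a f)(i,j)$ and, by linearity of the expectation,
$$
   \mathbb{E}[|(M_{\sqrt w} D_a f)(i,j)|^2] = w(i,j)\,\mathbb{E}[|(D_a f)(i,j)|^2] = 1
$$
directly from the defining choice of $w$. This step is purely formal once the first formula is in hand.

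The only delicate point I expect is the well-definedness of $w$, namely that $\mathbb{E}[|(D_a f)(i,j)|^2] > 0$ so the reciprocal exists and is positive. Using the same factorization together with Jensen's inequality, $\mathbb{E}[e^{2(\nu(j)-\nu(i))}] \ge (\mathbb{E}[e^{\nu(j)-\nu(i)}])^2 = 1$, with equality only when the increment is almost surely constant, hence identically zero by the mean-one condition. Thus positivity holds exactly when the walk has a genuinely random increment across the edge $(i,j)$, and I would either impose this nondegeneracy or remark on it explicitly; the remaining arithmetic is routine.
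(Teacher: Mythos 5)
Your proof is correct and follows essentially the same route as the paper's: both reduce to $(D_a f)(i,j) = e^{\phi(j)}(e^{\nu(j)}-e^{\nu(i)})$ and exploit the independence of the increment $\nu(j)-\nu(i)$ from $\mathcal{F}_i$ together with $\mathbb{E}[e^{\nu(j)-\nu(i)}]=1$; the paper merely factors $e^{2\nu(i)}$ out of the whole square before expanding, whereas you expand first and apply the factorization only to the cross term, which is the same computation in a different order. Your added observation that $w(i,j)$ is well defined precisely when the increment is nondegenerate (via Jensen) is a worthwhile point the paper leaves implicit.
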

\begin{proof}
We compute the variance
\begin{align*}
  \mathbb{E} [ |(D_a f)(i,j)|^2 ]  & = e^{2 \phi(j) }\mathbb E [ (e^{\nu(j)}  - e^{\nu(i)})^2 ] \\ 
  &= e^{2 \phi(j) } \mathbb E [ (e^{\nu(j)-\nu(i)}  - 1)^2 ] \mathbb E[e^{2 \nu(i)}] \, .
\end{align*}
Now expanding the square, using that $\mathbb E[ e^{\nu(j)-\nu(i)}] = 1$, and canceling terms, we obtain the desired expression.
\end{proof}

We would like to make use of a measure concentration argument in order to bound the value
of the quadratic form associated with $\Delta_{w,a}$, and hence, of the norms in the scattering transform.
Typically, such arguments rely on sums of independent, identically distributed random variables.
In our setting, the assumption on the independence of the increment $\nu(j) - \nu(i)$ and $\mathcal{F}_i$ 
may still leave a set of dependent increments along the edges in $E$. To avoid technical complications,
we consider a Laplacian that is defined on a subset of edges.

\begin{defn}
Let $F \subset E$. We define the graph induced by the subset of edges $F$ to be the graph whose edge set
is $F$ and whose vertex set is $\partial F=\cup\{i \in e: e \in F\}$, the union of all vertices that are in edges contained in $F$.
A similar definition applies to directed graphs.
Given $w$ and $a$ as before, we define $\Delta_{w,a}^F$ to be the Laplacian on $\ell^2(\partial F)$ corresponding
to the quadratic form $Q_{w,a}^F(f) = \sum_{(i,j) \in F} w(i,j)(e^{a(i,j)} f(i) - f(j))^2$.
\end{defn}

\begin{prop}
Let $F \subset E$.
If $a$ is a gradient field with potential $\phi$, and $f = e^{\phi+\nu}$, $\nu$ as described in Definition~\ref{def:multrandwalk}, and
$w$ is chosen as in the preceding lemma, then
$$
    Q_{w,a}^F(f) = \sum_{(i,j) \in F} M_w(i,j) (D_a f(i,j))^2
$$  
is a random variable with expected value $|F|$.\end{prop}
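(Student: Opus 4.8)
The plan is to reduce the statement to a termwise application of the preceding lemma together with linearity of expectation; notably, this requires no independence hypothesis. First I would observe that each summand in $Q_{w,a}^F(f)$ is exactly the squared modulus of the corresponding component of $M_{\sqrt w} D_a f$. Since $M_w(i,j) = w(i,j)$ and $(\sqrt{w(i,j)})^2 = w(i,j)$, we have
$$
  M_w(i,j)\,(D_a f(i,j))^2 = w(i,j)\,(D_a f(i,j))^2 = |(M_{\sqrt w} D_a f)(i,j)|^2 \, .
$$
This rewriting is the only algebraic content of the argument, and it shows at the same time that $Q_{w,a}^F(f)$ is a finite sum of measurable functions of $\nu$, hence a genuine random variable.

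Next I would invoke the preceding lemma. With the weight chosen as $w(i,j) = (\mathbb E[|(D_a f)(i,j)|^2])^{-1}$, that lemma asserts that $\mathbb E[|(M_{\sqrt w} D_a f)(i,j)|^2] = 1$ for every edge $(i,j) \in E$, in particular for every edge of the subset $F$. Hence each summand of $Q_{w,a}^F(f)$ has expectation $1$.

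Finally, I would take the expectation of the finite sum and use linearity:
$$
  \mathbb E[Q_{w,a}^F(f)] = \sum_{(i,j) \in F} \mathbb E[|(M_{\sqrt w} D_a f)(i,j)|^2] = \sum_{(i,j) \in F} 1 = |F| \, .
$$
The point to be careful about is precisely that this computation uses \emph{only} linearity of expectation and the termwise normalization of $w$; it does \emph{not} require the increments along the edges of $F$ to be independent. The independence structure emphasized in the surrounding text, and the reason for restricting to the edge subset $F$, is what will be needed for the concentration or variance estimates that presumably follow, but it is irrelevant to the first moment computed here. In that sense the only real hazard is the temptation to invoke independence prematurely; the substantive step is simply matching the normalization of $w$ to each edge appearing in $F$.
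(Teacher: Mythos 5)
Your proposal is correct and follows the same route as the paper's own (one-line) proof: rewrite each summand as $|(M_{\sqrt w} D_a f)(i,j)|^2$, apply the preceding lemma to get expectation $1$ per edge, and sum by linearity. Your added remark that independence plays no role in this first-moment computation is accurate and consistent with the paper, which reserves the independence structure for the subsequent variance and concentration estimates.
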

\begin{proof}
This is a result of the linearity of the expectation and summing the terms in the preceding lemma.
\end{proof}




By reducing $E$ to a subset $F$, we may choose 
it so that it contains edges for which the associated increments of $\nu$ are independent.
In addition, in order to get explicit bounds, we assume that each $\nu(i)$ has a normal distribution.

\begin{defn}
If the probability space $(\Omega, \mathcal F, \mathbb P)$, the directed graph $(V,E)$ 
and the stochastic process $\nu$ indexed by $V$ are as in Definition~\ref{def:multrandwalk}, and for each $i \in V$, $\nu(i)$ has a normal distribution,
then we say that $\{e^{\nu(i)}\}_{i \in V}$ is a log-normal multiplicative random walk indexed by $V$.
\end{defn}

We collect some properties of the stochastic process for this special case.

\begin{lemma}
If $\{e^{\nu(i)}\}_{i \in V}$ is a log-normal multiplicative random walk indexed by $V$, 
and $\mu_i = \mathbb E[\nu(i)]$, $\sigma_i^2 = \mathbb E[ \nu(i)^2 ] - \mu_i^2$, then
$\mu_i = - \frac{\sigma_i^2}{2}$.
\end{lemma}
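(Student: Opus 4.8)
The plan is to combine the normalization constraint inherited from the multiplicative random walk structure with the standard moment generating function of a Gaussian. Recall that immediately after Definition~\ref{def:multrandwalk} it was established, by telescoping the multiplicative increments and using $\mathbb E[e^{\nu(j)-\nu(i)}]=1$, that $\mathbb E[e^{\nu(i)}] = 1$ holds for every $i \in V$. This identity is distribution-free: it is the only place where the random-walk hypothesis enters, and it makes no use of normality.

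First I would invoke the log-normal assumption, namely that each $\nu(i)$ is normally distributed, say $\nu(i) \sim N(\mu_i,\sigma_i^2)$ with $\mu_i$ and $\sigma_i^2$ as in the statement. The moment generating function of a Gaussian then supplies the closed form $\mathbb E[e^{\nu(i)}] = e^{\mu_i + \sigma_i^2/2}$; one may either quote this as standard or recover it in one line by completing the square in the integral $\int e^{x}\,\frac{1}{\sqrt{2\pi}\,\sigma_i}\,e^{-(x-\mu_i)^2/(2\sigma_i^2)}\,dx$. Equating the two expressions for $\mathbb E[e^{\nu(i)}]$ gives $e^{\mu_i + \sigma_i^2/2} = 1$, and taking logarithms yields $\mu_i + \sigma_i^2/2 = 0$, which rearranges to the claimed identity $\mu_i = -\sigma_i^2/2$.

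There is no genuine obstacle here; the statement is essentially a one-line consequence of the two ingredients above. The only point demanding a modicum of care is to keep clear which hypothesis supplies which ingredient: the value $\mathbb E[e^{\nu(i)}]=1$ comes entirely from the multiplicative random walk normalization, whereas the exponential form $e^{\mu_i+\sigma_i^2/2}$ comes from normality. Keeping these separate ensures the two facts are legitimately combined rather than one being circularly used to derive the other.
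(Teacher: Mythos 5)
Your proof is correct and follows exactly the paper's own argument: the paper likewise equates the telescoped normalization $\mathbb E[e^{\nu(i)}]=1$ with the Gaussian moment formula $e^{\mu_i+\sigma_i^2/2}$ and solves for $\mu_i$. Your version simply spells out the provenance of each ingredient more explicitly than the paper's one-line proof.
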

\begin{proof}
This follows from the condition on the expected value $\mathbb E[ e^{\nu(i)} ] = e^{\mu_i + \sigma_i^2/2} = 1$. 
\end{proof}

\begin{lemma}
If $\{e^{\nu(i)}\}_{i \in V}$ is a log-normal multiplicative random walk indexed by $V$, 
and $\mu_i = \mathbb E[\nu(i)]$, $\sigma_i^2 = \mathbb E[ \nu(i)^2 ] - \mu_i^2$,
then the $n$-th moment is computed as 
$$ 
   \mathbb E [ e^{n \nu(i)} ] = e^{n(n-1) \sigma_i^2/2 } \, .
$$
\end{lemma}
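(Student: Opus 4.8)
The plan is to recognize $\mathbb{E}[e^{n\nu(i)}]$ as the moment generating function of the normally distributed random variable $\nu(i)$ evaluated at the integer $n$. Since $\nu(i)$ is normally distributed with mean $\mu_i$ and variance $\sigma_i^2$ by the log-normal assumption, the standard Gaussian moment generating function gives, for any real $t$,
$$
   \mathbb{E}[e^{t\nu(i)}] = e^{t\mu_i + t^2 \sigma_i^2/2} \, .
$$
First I would record this identity, either by citing it as a standard fact about the normal distribution or by deriving it through completing the square in the exponent of the Gaussian integral
$$
   \frac{1}{\sqrt{2\pi}\,\sigma_i}\int_{\mathbb{R}} e^{tx}\, e^{-(x-\mu_i)^2/(2\sigma_i^2)}\,dx \, .
$$

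Then I would specialize to $t = n$, obtaining $\mathbb{E}[e^{n\nu(i)}] = e^{n\mu_i + n^2\sigma_i^2/2}$, and substitute the relation $\mu_i = -\sigma_i^2/2$ established in the preceding lemma. This yields the exponent $-n\sigma_i^2/2 + n^2\sigma_i^2/2 = n(n-1)\sigma_i^2/2$, which is exactly the claimed formula.

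I expect no serious obstacle here; the only point requiring any care is the invocation of the Gaussian moment generating function, and if a self-contained derivation is preferred over citation, the completing-the-square step must be carried out correctly, so that the leftover constant terms produce the factor $t\mu_i + t^2\sigma_i^2/2$ while the remaining Gaussian integral normalizes to one. Once $\mu_i = -\sigma_i^2/2$ is inserted from the previous lemma, the result is immediate.
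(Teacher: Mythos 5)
Your proposal is correct and follows exactly the paper's argument: invoke the standard formula $\mathbb{E}[e^{n\nu(i)}] = e^{n\mu_i + n^2\sigma_i^2/2}$ for the moments of a log-normal random variable, then substitute $\mu_i = -\sigma_i^2/2$ from the preceding lemma to obtain the exponent $n(n-1)\sigma_i^2/2$. The optional completing-the-square derivation of the moment generating function is a fine elaboration but is not needed; the paper simply cites the moment formula.
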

\begin{proof}
This identity is a consequence of the expression for the $n$-th moment of a log-normal random variable,
$$
  \mathbb E [ e^{n \nu(i)} ] = e^{ n \mu_i + n^2 \sigma_i^2 / 2 } \, ,
$$
in conjunction with the relationship between $\mu_i$ and $\sigma_i^2$.
\end{proof}

\begin{cor}
If $\nu$ is a log-normal multiplicative random walk on $V$, $(V,E)$ 
a directed graph whose edges are aligned with the total ordering on $V$,
$w$ and $a$ are functions on $E$
as described, $a$ a gradient field with potential $\phi$, and $f=e^{\phi+\nu}$, then
we have for each $(i,j) \in E$, 
$$
   | (M_{\sqrt w} D_a f)(i,j) |^2 = \frac{(e^{\nu(j)} - e^{\nu(i)})^2 }{e^{\sigma_j^2} - e^{\sigma_i^2} }\, .
$$
\end{cor}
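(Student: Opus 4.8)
The plan is to combine three ingredients that are already in place: the explicit pathwise formula for $D_a f$ under the gradient-field assumption, the defining choice of the weight $w(i,j) = (\mathbb{E}[|(D_a f)(i,j)|^2])^{-1}$, and the log-normal moment identity $\mathbb{E}[e^{n\nu(i)}] = e^{n(n-1)\sigma_i^2/2}$ from the preceding lemma. The asserted equality is an identity of random variables, holding for every realization of $\nu$; the only stochastic object entering $w$ is an expectation, so $w(i,j)$ is a deterministic scalar, and the whole claim reduces to tracking how this scalar rescales the random quantity $D_a f(i,j)$.

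First I would recall that, since $a$ is a gradient field with potential $\phi$ and $f = e^{\phi+\nu}$, an earlier computation gives the pathwise identity $D_a f(i,j) = e^{\phi(j)}(e^{\nu(j)} - e^{\nu(i)})$. Next I would evaluate the weight. By the lemma fixing $w$, we have $w(i,j)^{-1} = \mathbb{E}[|(D_a f)(i,j)|^2] = e^{2\phi(j)}\,\mathbb{E}[e^{2\nu(j)} - e^{2\nu(i)}]$. Applying the log-normal moment formula with $n=2$, namely $\mathbb{E}[e^{2\nu(i)}] = e^{\sigma_i^2}$, yields $\mathbb{E}[e^{2\nu(j)} - e^{2\nu(i)}] = e^{\sigma_j^2} - e^{\sigma_i^2}$, so that
$$
   w(i,j) = \frac{e^{-2\phi(j)}}{e^{\sigma_j^2} - e^{\sigma_i^2}} \, .
$$

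It then remains to form $(M_{\sqrt w} D_a f)(i,j) = \sqrt{w(i,j)}\, D_a f(i,j)$ and substitute both expressions. The factor $\sqrt{w(i,j)} = e^{-\phi(j)}/\sqrt{e^{\sigma_j^2} - e^{\sigma_i^2}}$ cancels the $e^{\phi(j)}$ in $D_a f(i,j)$, leaving $(e^{\nu(j)} - e^{\nu(i)})/\sqrt{e^{\sigma_j^2} - e^{\sigma_i^2}}$, and squaring produces the claimed formula. There is essentially no obstacle beyond bookkeeping; the single point to keep straight is that the expectation in the definition of $w$ collapses the random $D_a f$ to a \emph{deterministic} normalization, after which the $\phi(j)$-dependence cancels and only the noise increments and the variance parameters survive. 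Well-definedness of $w$ requires $e^{\sigma_j^2} - e^{\sigma_i^2} > 0$, which follows from the independence of the increment $\nu(j)-\nu(i)$ from $\mathcal{F}_i$ (so that $\sigma_j^2 \ge \sigma_i^2$ along edges aligned with the order), but this positivity plays no role in the purely algebraic identity itself.
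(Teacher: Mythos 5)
Your proof is correct and follows exactly the route the paper intends (the corollary is stated without proof there): substitute the pathwise formula $D_a f(i,j)=e^{\phi(j)}(e^{\nu(j)}-e^{\nu(i)})$, evaluate $w(i,j)^{-1}=e^{2\phi(j)}(e^{\sigma_j^2}-e^{\sigma_i^2})$ via the second-moment identity $\mathbb{E}[e^{2\nu(i)}]=e^{\sigma_i^2}$, and cancel the $e^{\phi(j)}$ factors. Your side remark on well-definedness ($\sigma_j^2\ge\sigma_i^2$ from independence of the increment) is a correct and welcome addition.
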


There are tail bounds on sums and differences of log-normal random variables, but they are
relatively technical and not explicit. We pursue a simpler approach and bound the variance of
the quadratic form $\langle \Delta_{w,a} f,f\rangle$. This means we need to compute 
higher moments of the log-normal random variables.


\begin{cor}
  With the same assumptions as in the preceding corollary, we have for each $(i,j) \in E$,
  $$
     \mathbb E [ | (M_{\sqrt w} D_a f)(i,j) |^4 ] 
     = e^{4 \sigma_j^2} + 2 e^{3 \sigma_j^2 + \sigma_i^2} + 3 e^{2 \sigma_j^2 + 2 \sigma_i^2} -3 e^{4 \sigma_i^2} \, .
  $$
\end{cor}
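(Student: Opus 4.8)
The plan is to start from the identity for the squared quantity in the preceding corollary and reduce everything to one expectation over the multiplicative increment along the edge $(i,j)$. Squaring that identity gives
$$
|(M_{\sqrt w}D_a f)(i,j)|^4 = \frac{(e^{\nu(j)}-e^{\nu(i)})^4}{(e^{\sigma_j^2}-e^{\sigma_i^2})^2},
$$
so since the denominator is deterministic, it suffices to evaluate the numerator $\mathbb E[(e^{\nu(j)}-e^{\nu(i)})^4]$ and then divide.

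The first step is to isolate the increment. Writing $\xi = \nu(j)-\nu(i)$ and factoring out $e^{4\nu(i)}$ yields $(e^{\nu(j)}-e^{\nu(i)})^4 = e^{4\nu(i)}(e^{\xi}-1)^4$. Because $j>i$, the increment $\xi$ is independent of $\mathcal F_i$, and hence of $\nu(i)$, so the expectation factors as
$$
\mathbb E[(e^{\nu(j)}-e^{\nu(i)})^4] = \mathbb E[e^{4\nu(i)}]\,\mathbb E[(e^{\xi}-1)^4].
$$
The factor $\mathbb E[e^{4\nu(i)}]=e^{6\sigma_i^2}$ comes directly from the moment lemma. For the second factor I expand $(e^\xi-1)^4$ by the binomial theorem and need the integer moments $\mathbb E[e^{k\xi}]$ for $k=0,\dots,4$. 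Rather than invoke normality of $\xi$ separately, I extract these from the same independence: $\mathbb E[e^{k\nu(j)}]=\mathbb E[e^{k\nu(i)}]\,\mathbb E[e^{k\xi}]$ gives $\mathbb E[e^{k\xi}] = \mathbb E[e^{k\nu(j)}]/\mathbb E[e^{k\nu(i)}] = e^{k(k-1)(\sigma_j^2-\sigma_i^2)/2}$, where the two ratios use the moment lemma for the normal variables $\nu(i)$ and $\nu(j)$. In particular the case $k=1$ recovers the normalization $\mathbb E[e^\xi]=1$ already assumed.

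Substituting these, with $\tau^2 = \sigma_j^2-\sigma_i^2$, gives $\mathbb E[(e^\xi-1)^4] = e^{6\tau^2}-4e^{3\tau^2}+6e^{\tau^2}-3$, and multiplying by $e^{6\sigma_i^2}$ produces the numerator
$$
\mathbb E[(e^{\nu(j)}-e^{\nu(i)})^4] = e^{6\sigma_j^2}-4e^{3\sigma_j^2+3\sigma_i^2}+6e^{\sigma_j^2+5\sigma_i^2}-3e^{6\sigma_i^2}.
$$
The last step is to divide by $(e^{\sigma_j^2}-e^{\sigma_i^2})^2 = e^{2\sigma_i^2}(e^{\tau^2}-1)^2$ and simplify. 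Setting $u = e^{\tau^2}$, this amounts to the polynomial division $(u^6-4u^3+6u-3)/(u-1)^2 = u^4+2u^3+3u^2-3$, which is justified by noting that $u^6-4u^3+6u-3$ and its derivative both vanish at $u=1$, so $(u-1)^2$ divides it. Restoring the prefactor $e^{4\sigma_i^2}$ and re-expanding in $\sigma_i^2,\sigma_j^2$ delivers the four claimed terms. The only delicate point is the bookkeeping in this final simplification; the probabilistic content sits entirely in the factorization by independence and in the increment moment formula, both of which reuse the structure of the preceding lemmas.
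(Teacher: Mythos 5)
Your proposal is correct and follows essentially the same route as the paper: factor out $e^{4\nu(i)}$, use independence of the increment, insert the log-normal moments to obtain the numerator $e^{6\sigma_j^2}-4e^{3\sigma_j^2+3\sigma_i^2}+6e^{\sigma_j^2+5\sigma_i^2}-3e^{6\sigma_i^2}$ (identical to the paper's intermediate expression), and then divide by $(e^{\sigma_j^2}-e^{\sigma_i^2})^2$. Your explicit derivation of the increment moments $\mathbb E[e^{k\xi}]$ as a ratio of vertex moments, and the verification that $(u-1)^2$ divides the numerator polynomial, merely spell out steps the paper leaves implicit.
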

\begin{proof}
Using independence gives
$$
 \mathbb E [ | (M_{\sqrt w} D_a f)(i,j) |^4 ]  = (e^{\sigma_j^2} - e^{\sigma_i^2})^{-2} \mathbb E[ (e^{\nu(j)-\nu(i)} - 1)^4]
           \mathbb E[ e^{4 \nu(i)}]
$$
now inserting the expressions for the moments yields
$$
    \mathbb E [ | (M_{\sqrt w} D_a f)(i,j) |^4 ]  = (e^{\sigma_j^2} - e^{\sigma_i^2})^{-2}
    (e^{6 \sigma_j^2} - 4 e^{3 \sigma_j^2 + 3 \sigma_i^2} + 6 e^{\sigma_j^2 + 5 \sigma_i^2} - 3 e^{6\sigma_i^2} ) \, .
$$ 
Factoring the numerator and canceling terms gives the claimed expression.
\end{proof}

Assuming a path consisting of adjacent edges whose union is $F$,
using the independence assumption
for $\nu$ gives an expression for the variance of $ \langle \Delta_{w,a}^F f,f\rangle $, which we estimate.

\begin{thm}
Let $i_0, i_1, \dots, i_n$ be such that $i_j<i_{j+1}$, $(i_j,i_{j+1}) \in E$ for each $0 \le j \le n-1$, and assume $\nu$ is a log-normal
multiplicative random walk indexed by $V$, then the variance of the quadratic form associated with the Laplacian on $F = \{ (i_j,i_{j+1}):
0 \le j \le n-1 \}$ is bounded by
$$
      \mathbb E[(\langle \Delta_{w,a}^F f,f\rangle)^2] - n \le e^{4 \sigma_{i_n}^2} + \sum_{j=1}^n(3 e^{4 \sigma_{i_j}^2} - 1) - 3 \, .$$
\end{thm}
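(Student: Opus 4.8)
The plan is to expand the second moment of the quadratic form into diagonal and off-diagonal contributions and to evaluate each using the filtration underlying the multiplicative random walk. Writing $X_j = |(M_{\sqrt{w}} D_a f)(i_j,i_{j+1})|^2$ for $0 \le j \le n-1$, the preceding proposition identifies $\langle \Delta_{w,a}^F f,f\rangle = \sum_{j=0}^{n-1} X_j$ and the earlier lemma fixes $\mathbb{E}[X_j]=1$, so that
$$
   \mathbb{E}[(\langle \Delta_{w,a}^F f,f\rangle)^2] = \sum_{j=0}^{n-1}\mathbb{E}[X_j^2] + 2\sum_{0 \le j < k \le n-1}\mathbb{E}[X_j X_k].
$$
The diagonal terms $\mathbb{E}[X_j^2]$ are supplied verbatim by the preceding corollary, so the only real work lies in the off-diagonal terms and in collecting the resulting exponential sums.

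For the off-diagonal terms I would exploit that, for $j<k$, the variable $X_j$ is measurable with respect to $\mathcal{F}_{i_{j+1}}\subseteq \mathcal{F}_{i_k}$. Writing $X_k = e^{2\nu(i_k)}(e^{\nu(i_{k+1})-\nu(i_k)}-1)^2 / (e^{\sigma_{i_{k+1}}^2}-e^{\sigma_{i_k}^2})$ and conditioning on $\mathcal{F}_{i_k}$, the independence of the increment $\nu(i_{k+1})-\nu(i_k)$ from $\mathcal{F}_{i_k}$ together with $\mathbb{E}[(e^{\nu(i_{k+1})-\nu(i_k)}-1)^2] = e^{\sigma_{i_{k+1}}^2-\sigma_{i_k}^2}-1$ collapses the conditional expectation to the clean identity $\mathbb{E}[X_k \mid \mathcal{F}_{i_k}] = e^{2\nu(i_k)-\sigma_{i_k}^2}$. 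Conditioning once more on $\mathcal{F}_{i_{j+1}}$ and using $\mathbb{E}[e^{2(\nu(i_k)-\nu(i_{j+1}))}] = e^{\sigma_{i_k}^2-\sigma_{i_{j+1}}^2}$ makes the dependence on $k$ cancel, leaving $\mathbb{E}[X_j X_k] = e^{-\sigma_{i_{j+1}}^2}\mathbb{E}[X_j e^{2\nu(i_{j+1})}]$, an explicit quantity I would then evaluate through the log-normal moment formula of the earlier lemma.

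With these pieces in hand, I would substitute the diagonal and off-diagonal formulas, sum the (now $k$-independent) cross terms, and reduce everything to a combination of exponentials of the form $e^{c_1 \sigma_{i_j}^2 + c_2 \sigma_{i_{j+1}}^2}$. The final step is to bound each such exponential from above using the monotonicity $\sigma_{i_0}^2 \le \sigma_{i_1}^2 \le \cdots \le \sigma_{i_n}^2$ of the accumulated variances along the walk (each increment contributes a non-negative variance, as follows from the independence in Definition~\ref{def:multrandwalk}), collapsing the mixed exponents to powers of the larger endpoint and assembling the claimed right-hand side.

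The step I expect to be the main obstacle is the off-diagonal analysis. Because the normalized squared increments $X_j$ share the history of the walk, they are genuinely correlated and their covariances do not vanish, so one cannot treat the $X_j$ as independent; the whole estimate hinges on the iterated-conditioning identity $\mathbb{E}[X_k\mid\mathcal{F}_{i_k}] = e^{2\nu(i_k)-\sigma_{i_k}^2}$ and on the subsequent bookkeeping of the many exponential terms. Keeping the signs straight and applying the monotonicity estimates at exactly the right places in this last consolidation is where the argument is most delicate, and it is worth verifying the estimate on small paths to make sure the cross terms are being controlled as intended.
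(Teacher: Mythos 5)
Your conditioning identities for the off-diagonal terms are correct as far as they go: both $\mathbb{E}[X_k\mid\mathcal{F}_{i_k}]=e^{2\nu(i_k)-\sigma_{i_k}^2}$ and $\mathbb{E}[X_jX_k]=e^{-\sigma_{i_{j+1}}^2}\mathbb{E}[X_je^{2\nu(i_{j+1})}]$ check out. But your route diverges from the paper's at the very first step, and the divergence is fatal to the plan. The paper's proof never expands the square into diagonal and cross contributions: invoking ``the independence assumption,'' it reads the left-hand side as the sum of single-edge variances $\sum_j(\mathbb{E}[X_j^2]-1)$, substitutes the fourth-moment formula of the preceding corollary, telescopes the resulting sum, and applies the monotonicity $\sigma_i\le\sigma_j$ for $i<j$ --- your last step, but applied only to the diagonal block. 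No cross terms appear anywhere in it.

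Once you do include them, as you propose, the displayed bound is out of reach. Each $\mathbb{E}[X_jX_k]$ tends to $\mathbb{E}[X_j]\mathbb{E}[X_k]=1$ as the step variances shrink (your own identity gives, for instance, $\mathbb{E}[X_0X_1]=(e^{5s^2}-2e^{2s^2}+1)/(e^{s^2}-1)=1+8s^2+O(s^4)$ when $\sigma_{i_0}^2=0$ and the per-step variance is $s^2$), so the off-diagonal block contributes about $2\binom{n}{2}=n(n-1)$ to $\mathbb{E}[(\langle\Delta_{w,a}^Ff,f\rangle)^2]$. In the small-variance limit the left-hand side is therefore roughly $3n+n(n-1)-n=n^2+n$, while the right-hand side tends to $2n-2$: a $\Theta(n^2)$ quantity cannot be absorbed into an $O(n)$ bound by any amount of bookkeeping or monotonicity in your final consolidation step. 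Indeed the small-path sanity check you yourself recommend already exposes the problem at $n=1$, where there are no cross terms at all and the single diagonal term gives $\mathbb{E}[X_0^2]-1\to3-1=2$ against a right-hand side tending to $4e^{0}-4=0$. So the honest expansion you set up shows that the only way to land on the stated right-hand side is to do what the paper does --- discard the cross terms and bound $\sum_j\mathrm{Var}(X_j)$ rather than $\mathbb{E}[(\langle\Delta_{w,a}^Ff,f\rangle)^2]-n$ --- and that this omission is not innocent; the obstacle you flagged at the end of your proposal is real and, for the statement as written, insurmountable.
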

\begin{proof}
We renumber without loss of generality so that $i_j = j $.
When summing, there is a partial cancellation of terms as in a telescoping series,
$$
  \mathbb E[(\langle \Delta_{w,a}^F f,f\rangle)^2] - n = e^{4 \sigma_n^2} + \sum_{j=1}^n (2 e^{3 \sigma_j^2 + \sigma_i^2} + 3 e^{2 \sigma_j^2 + 2 \sigma_i^2}-1 -2 e^{4 \sigma_j^2}) - 3 e^{4 \sigma_0^2} \, .
$$
Now using the fact that for $j>i$, $\sigma_j \ge \sigma_i$, we estimate
$$
   \mathbb E[(\langle \Delta_{w,a}^F f,f\rangle)^2] - n \le e^{4 \sigma_n^2} + \sum_{j=1}^n ( 3 e^{4 \sigma_j^2} - 1) -3 \, .
$$
\end{proof}

We notice that the numerical constants in this bound cannot be improved further because if $n=1$ and $\sigma_0, \sigma_1 \to 0$, then the variance vanishes.

We abbreviate the bound
$$
   U = e^{4 \sigma_n^2} + \sum_{j=1}^n ( 3 e^{4 \sigma_j^2} - 1) -3 
$$
and formulate a corollary for the concentration of the quadratic form of $\Delta_{w,a}^F$.

\begin{cor}
Let $\phi, \nu$ be as above, $f = e^\phi+\nu$, $F \subset E$ consist of a sequence of adjacent edges whose 
vertices form an increasing sequence,
and consider the random variable $S_F=\langle \Delta^F_{w,a} f, f\rangle $, then for $\delta>0$,
$$
  \mathbb P [ S_F \ge |F| + \delta ] \le \frac{U}{U+\delta^2} \, .
$$
\end{cor}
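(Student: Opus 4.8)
The plan is to recognize the asserted inequality as an instance of the one-sided Chebyshev inequality (Cantelli's inequality) applied to the random variable $S_F = \langle \Delta^F_{w,a} f, f\rangle$. Cantelli's inequality states that for any random variable $X$ with mean $\mu$ and finite variance $\sigma^2$, and any $\lambda>0$,
$$
   \mathbb P[X \ge \mu + \lambda] \le \frac{\sigma^2}{\sigma^2 + \lambda^2} \, .
$$
To deploy this, I need exactly two inputs about $S_F$, namely its mean and a bound on its variance, and both are already supplied by the preceding results.

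First I would recall from the earlier Proposition that $\mathbb E[S_F] = |F|$, so the mean is precisely $|F|$, which matches the centering $|F|+\delta$ in the statement. Next I would extract a variance bound from the preceding Theorem, which gives $\mathbb E[S_F^2] - |F| \le U$ (with $n=|F|$). Since the path $F$ contains at least one edge we have $|F|\ge 1$, hence $|F|^2 \ge |F|$, and therefore the genuine variance satisfies
$$
   \mathrm{Var}(S_F) = \mathbb E[S_F^2] - |F|^2 \le \mathbb E[S_F^2] - |F| \le U \, .
$$
This is the one place where a little care is needed: the Theorem is phrased through $\mathbb E[S_F^2]-|F|$ rather than the true second central moment, so the passage to the variance rests on the elementary observation $|F|^2 \ge |F|$.

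With mean $|F|$ and variance $\sigma^2 := \mathrm{Var}(S_F) \le U$ in hand, I would apply Cantelli's inequality with $\lambda = \delta$ to obtain $\mathbb P[S_F \ge |F|+\delta] \le \sigma^2/(\sigma^2+\delta^2)$. Finally, since the map $x \mapsto x/(x+\delta^2)$ has derivative $\delta^2/(x+\delta^2)^2 > 0$ and is thus increasing on $[0,\infty)$ for $\delta>0$, the bound $\sigma^2 \le U$ lets me replace $\sigma^2$ by $U$, yielding
$$
   \mathbb P[S_F \ge |F|+\delta] \le \frac{U}{U+\delta^2} \, ,
$$
as claimed. The probabilistic content is carried entirely by Cantelli's inequality, whose short derivation—applying Markov's inequality to $(S_F - |F| + u)^2$ and optimizing over $u>0$, with the minimum attained at $u = \sigma^2/\delta$—could be included if the inequality is not taken as standard; the only genuine bookkeeping is the identification of the mean and the reduction of the Theorem's quantity to the variance, and neither presents a real obstacle.
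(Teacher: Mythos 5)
Your proposal is correct and follows exactly the paper's route: the paper's proof is the one-line remark that the bound is Cantelli's inequality applied to $S_F$, using the mean $|F|$ from the Proposition and the variance bound $U$ from the Theorem. Your extra observation that the Theorem controls $\mathbb E[S_F^2]-|F|$ rather than the true variance $\mathbb E[S_F^2]-|F|^2$, and that $|F|^2\ge|F|$ closes this gap, is a worthwhile clarification of a step the paper leaves implicit.
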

\begin{proof}
This is a consequence of Cantelli's inequality \cite{cantelli} applied to the variance of $S_F$.
\end{proof}

In conjunction with the estimates for the norms in the scattering transform, we obtain more bounds.
\begin{cor}
Let $\phi, \nu$ be as above, $f = e^{\phi+\nu}$, $F \subset E$ consist of a sequence of adjacent edges whose 
vertices form an increasing sequence, let $(f_k,g_k)_{k=0}^n$ be defined with the scattering transform based
on $\Delta^F_{w,a}$
and consider the random variable $S_F=\langle \Delta^F_{w,a} f, f\rangle $, then for $\delta>0$,
$$
  \mathbb P [ \cup_{k=1}^n \{\|g_k\|^2 \ge t (1-e^{-t \lambda_{max}})^{(k-1)} 
  (|F| + \delta)\} ] \le \frac{U}{U+\delta^2} \, .
$$
\end{cor}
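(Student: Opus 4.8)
The plan is to turn the probabilistic statement about the layers $g_k$ into a single deterministic scalar inequality for $S_F$, and then invoke the concentration bound of the preceding corollary. The crucial point is that the earlier perturbation proposition provides a \emph{sample-wise} (deterministic) estimate, so it can be combined with the random event before any probabilities are taken.

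First I would apply the perturbation proposition to the scattering transform generated by $\Delta_{w,a}^F$. Squaring its conclusion gives, for each $k$ and each realization,
$$
   \|g_k\|^2 \le t\,(1-e^{-t\lambda_{\mathrm{max}}})^{(k-1)}\,\|M_{\sqrt w} D_a f\|^2,
$$
where $D_a$ now denotes the restricted difference operator $\ell^2(\partial F)\to\ell^2(F)$ and $\lambda_{\mathrm{max}}$ is the top eigenvalue of $\Delta_{w,a}^F$. The second step is to identify the right-hand factor with the random variable of interest: since $\Delta_{w,a}^F = D_a^* M_w D_a$ on $\ell^2(\partial F)$, we have $\|M_{\sqrt w} D_a f\|^2 = \langle \Delta_{w,a}^F f, f\rangle = S_F$, so the displayed bound reads $\|g_k\|^2 \le t(1-e^{-t\lambda_{\mathrm{max}}})^{(k-1)} S_F$.

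Next comes the key observation that the union over $k$ costs nothing. Fix $k$ and let $A_k = \{\|g_k\|^2 \ge t(1-e^{-t\lambda_{\mathrm{max}}})^{(k-1)}(|F|+\delta)\}$. On $A_k$ the lower bound from the event and the deterministic upper bound sandwich $\|g_k\|^2$; cancelling the common strictly positive prefactor $t(1-e^{-t\lambda_{\mathrm{max}}})^{(k-1)}$ (strictly positive because $t>0$ and $\lambda_{\mathrm{max}}>0$ for the nonempty path $F$) leaves $S_F \ge |F|+\delta$. Thus $A_k \subseteq \{S_F \ge |F|+\delta\}$ for every $k$, whence $\cup_{k=1}^n A_k \subseteq \{S_F \ge |F|+\delta\}$. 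Monotonicity of $\mathbb P$ together with the preceding corollary then yields $\mathbb P[\cup_{k=1}^n A_k] \le \mathbb P[S_F \ge |F|+\delta] \le U/(U+\delta^2)$, as claimed.

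I expect no genuine analytic difficulty here; the only points requiring care are bookkeeping ones. One must make sure the perturbation bound is invoked for the Laplacian $\Delta_{w,a}^F$ that actually drives this transform, so that $\lambda_{\mathrm{max}}$ and $S_F$ refer to the restricted operator, and one must check that the prefactor is strictly positive so the cancellation producing $S_F \ge |F|+\delta$ is legitimate. The conceptual content is precisely that each individual event $A_k$ already forces the single inequality $S_F \ge |F|+\delta$, so one application of Cantelli's inequality suffices and the bound does not deteriorate as the number of layers $n$ increases.
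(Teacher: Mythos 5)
Your argument is correct and is exactly the one the paper intends (the corollary is stated without a written proof, following the remark that it results from combining the norm estimates with the preceding Cantelli bound): the deterministic perturbation bound $\|g_k\|^2 \le t(1-e^{-t\lambda_{\mathrm{max}}})^{k-1}\|M_{\sqrt w}D_a f\|^2 = t(1-e^{-t\lambda_{\mathrm{max}}})^{k-1}S_F$ shows each event $A_k$, and hence their union, is contained in $\{S_F \ge |F|+\delta\}$, to which the preceding corollary applies. Your attention to the strict positivity of the prefactor and to using the restricted operator $\Delta_{w,a}^F$ is exactly the right bookkeeping.
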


This implies that large norms in the scattering transform form a rare event. Consequently, when observing such a large value,
then we can with confidence reject the hypothesis that the distribution $e^{\phi(i)+\nu(i)}$ for $i \in F$
is in accordance with the chosen values for $\phi$ and $\sigma$.

\section{An application to anomaly detection in traffic counts}\label{sec:traffic}

In the following, we illustrate the results we discussed by applying the scattering transform to traffic count data \cite{CalTrans}. We wish to train the scattering transform so that deviations from the usual daily and weekly periodicities in traffic patterns can be detected.

The graph we consider is given by $V=\{(i_1, i_2): 1 \le i_1 \le 288, 1 \le i_2 \le 364\}$,
where the first coordinate of a vertex numbers consective time intervals of 5 minute length during each day
and the second coordinate is the number of a day in the year. We have taken the liberty to shorten the year to have 364 days
so that a year has exactly 52 weeks. The time intervals are chronologically ordered, so we have a total ordering on $V$.

The main model assumptions we make is that traffic counts for each day $d$ are given by a log-normal random walk
on $V_d= \{(i_1,i_2) \in V: i_2 = d\}$, whose increments are determined by $\phi$ and $\sigma$ as described in the preceding section.
We also assume that the realizations of $\nu$ on different days are independent.
Finally, we assume that the parameters $\phi$ and $\sigma$ only depend on which workday of the week is present,
for $1\le d\le 7$, $1 \le m \le 288$, $\{f(d+7j,m): 0 \le j \le 51\}$ forms a set of  independent and identically distributed random variables.   
Informally, each Monday the stochastic process is an independent realization governed by a choice of $\phi$ and  $\sigma$,
and each weekday is modeled in such a way.

As stated, the main objective in this proof-of-concept application is to find anomalies in the weekly periodicities of the traffic patterns given by holidays. Early attempts to use standard methods such as principle component analysis
and linear discriminants that are saturated with a given confidence level showed that just estimating $\phi$ and $\sigma$ is not sufficient. Even when a simple type of clustering was used to separate workdays from weekends,
there was no clear way to relate the outliers to known sources of anomalous traffic such as holidays.
A more sophisticated approach was needed.

To construct the scattering transform, we first define the set of directed edges.
This set encodes which values of $f$ are expected to be closest. 
We choose $(i, j) \in E$ if $j_1-i_1=1$ and $j_2=i_2$ or if $j_1=i_1$ and $j_2 = i_2+7$. This means, two vertices are adjacent 
if they belong to two consecutive time blocks or if they represent the same time block spaced by one or two weeks.

For the implementation of the scattering transform, we restrict the edges to subsets. For each day $d$, the subset $F_d$ of the edge set given by the edges between the time blocks on that given day
and the edges in $E$ between vertices in this day and the vertices in the preceding or following two weeks at the same time, 
{\small \begin{align*}
   F_d = \{ (i, j) \in E: \, & i_1= j_1, i_2 = d, j_2=d+7 , \mbox{ or }  i_1 = j_1, i_2 = d+7, j_2 = d+14,\\
& \mbox{ or } i_1 = j_1, i_2 = d-7, j_2 = d,
   \mbox{ or } i_1 = j_1, i_2 = d-14, j_2 = d-7,     \\ & \mbox{ or } j_1 = i_1+1, i_2 = j_2 = d\} \, .
\end{align*}}
\noindent We have chosen a 5-week window for $F_d$ in order to have enough neighboring vertices so that anomalous traffic counts can dissipate under the heat semigroup.

An example of an occurrence of anomalous traffic is presented in Figure~\ref{fig:labor}. There, the most obvious response of the network is the output of the first layer. The traffic counts through the year and the 
resulting response of the first layer of the scattering transform are pictured in Figure~\ref{fig:layer}.

 
  \begin{figure}
 \hspace*{-.6cm} \includegraphics[width=1.8in]{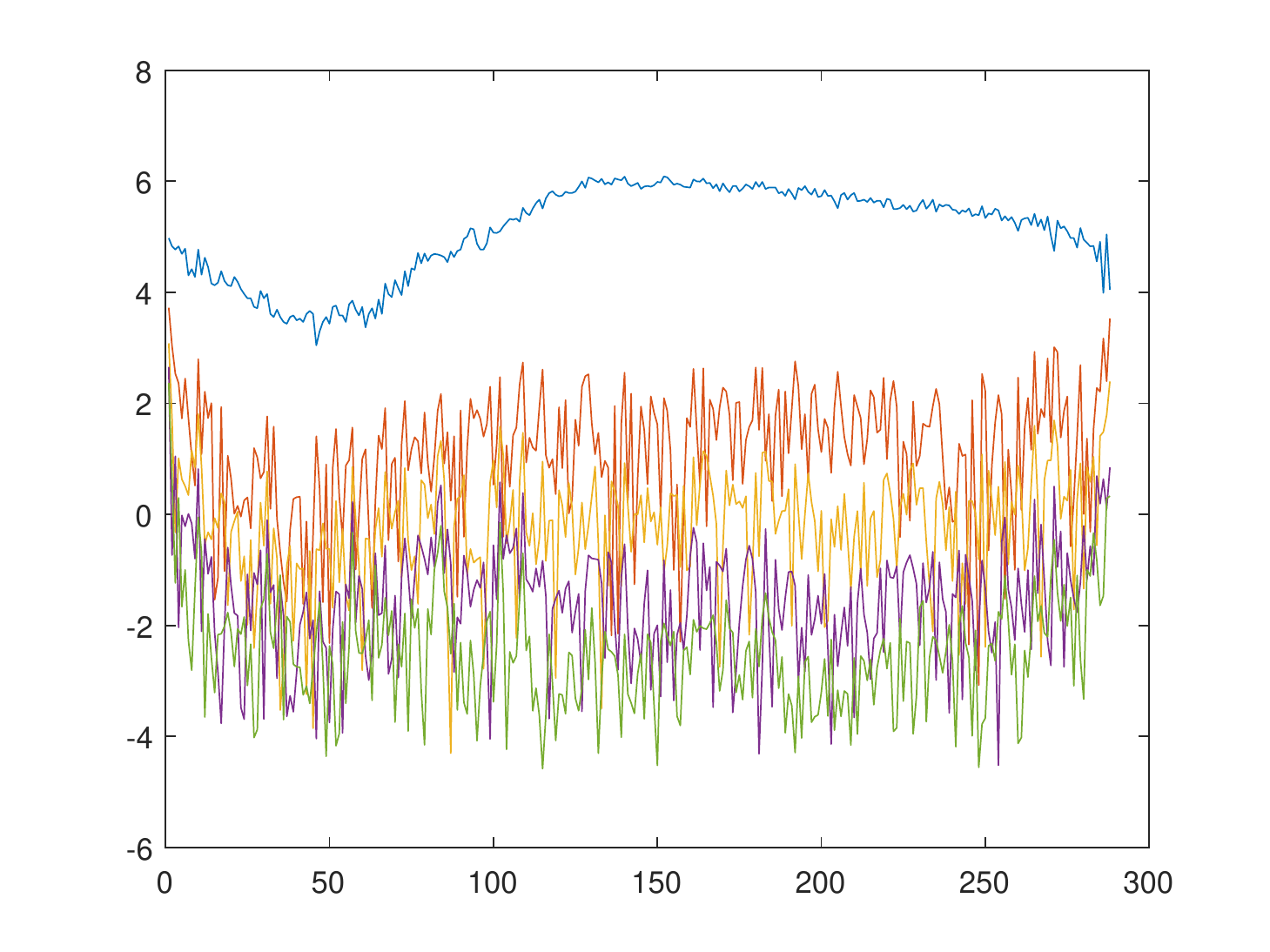}\hspace*{-.1cm}%
 \includegraphics[width=1.8in]{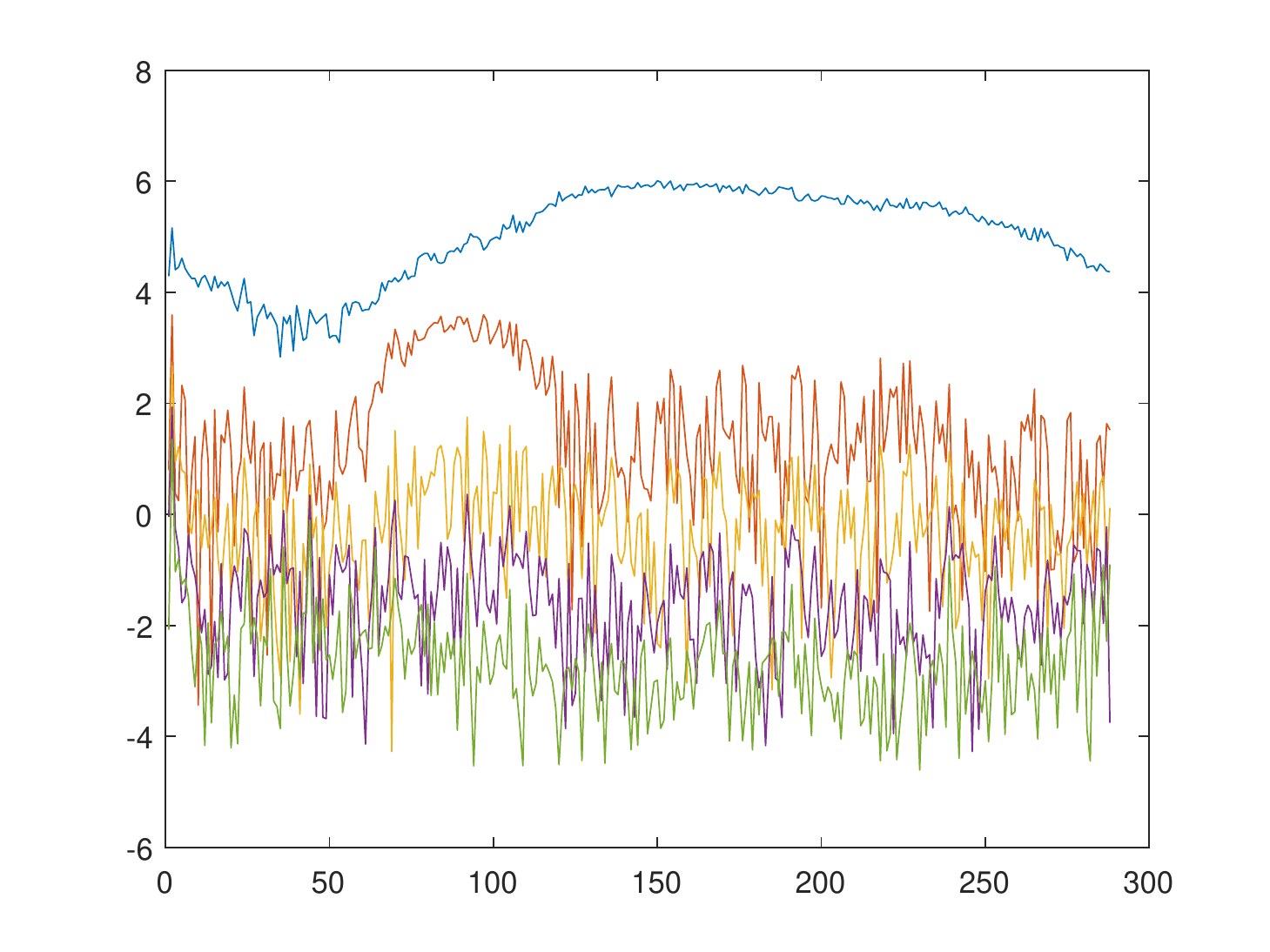}\hspace*{-.1cm}%
 \includegraphics[width=1.8in]{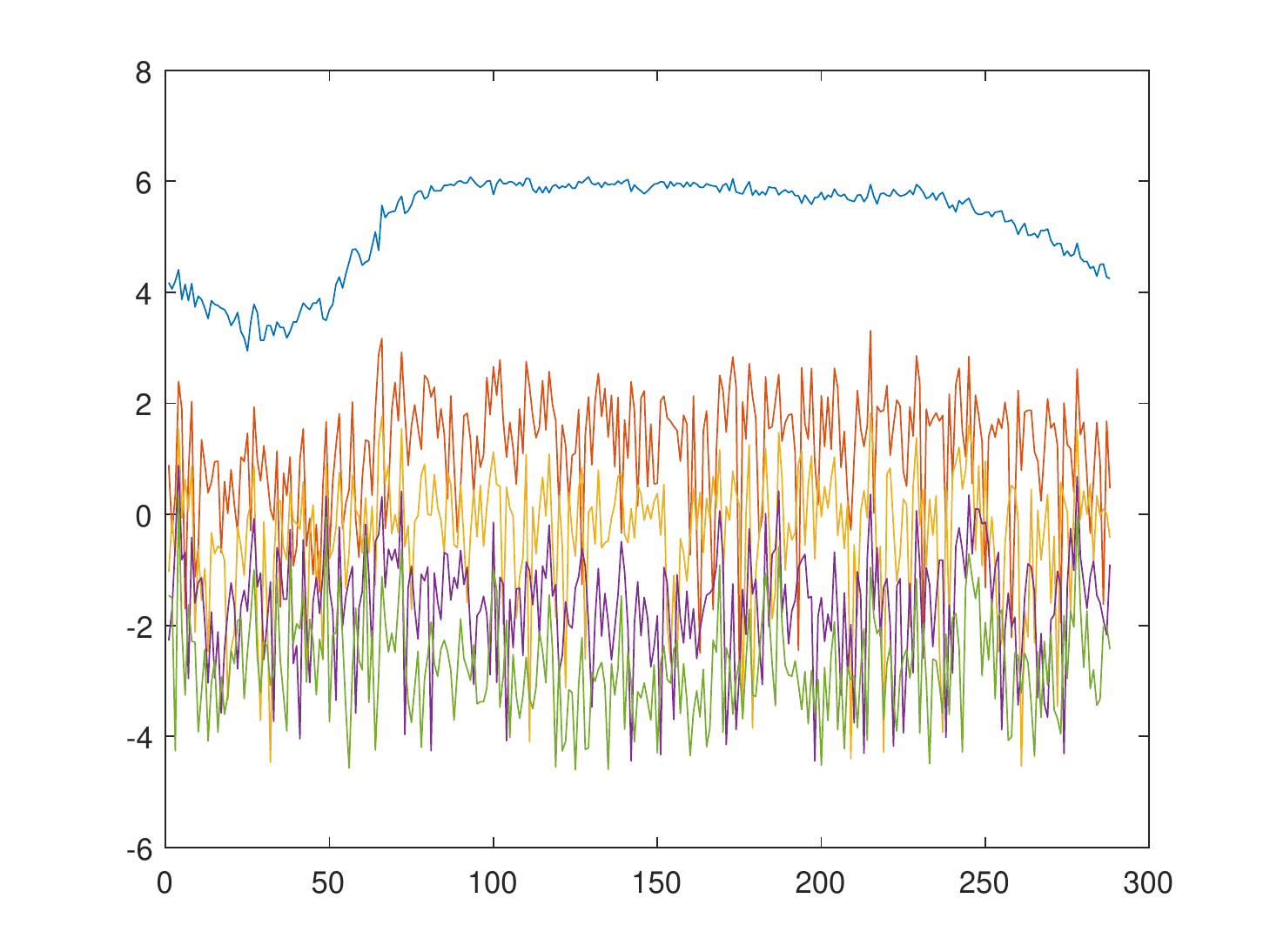}
 \caption{Labor Day weekend, from left to right: Sunday, Monday (holiday), Tuesday (workday). The plots represent the logarithm of the traffic counts (top) or
 of the output of the scattering transform at one station during the 288 consecutive 5-minute intervals in one day. The output of the first layer (red) responds to a deviation from the usual, weekly periodicity in the traffic intensities on Labor Day Monday. }
 \label{fig:labor}
 \end{figure}
 
 \begin{figure} 
\hspace*{-.6cm}
 \includegraphics[width=2.8in]{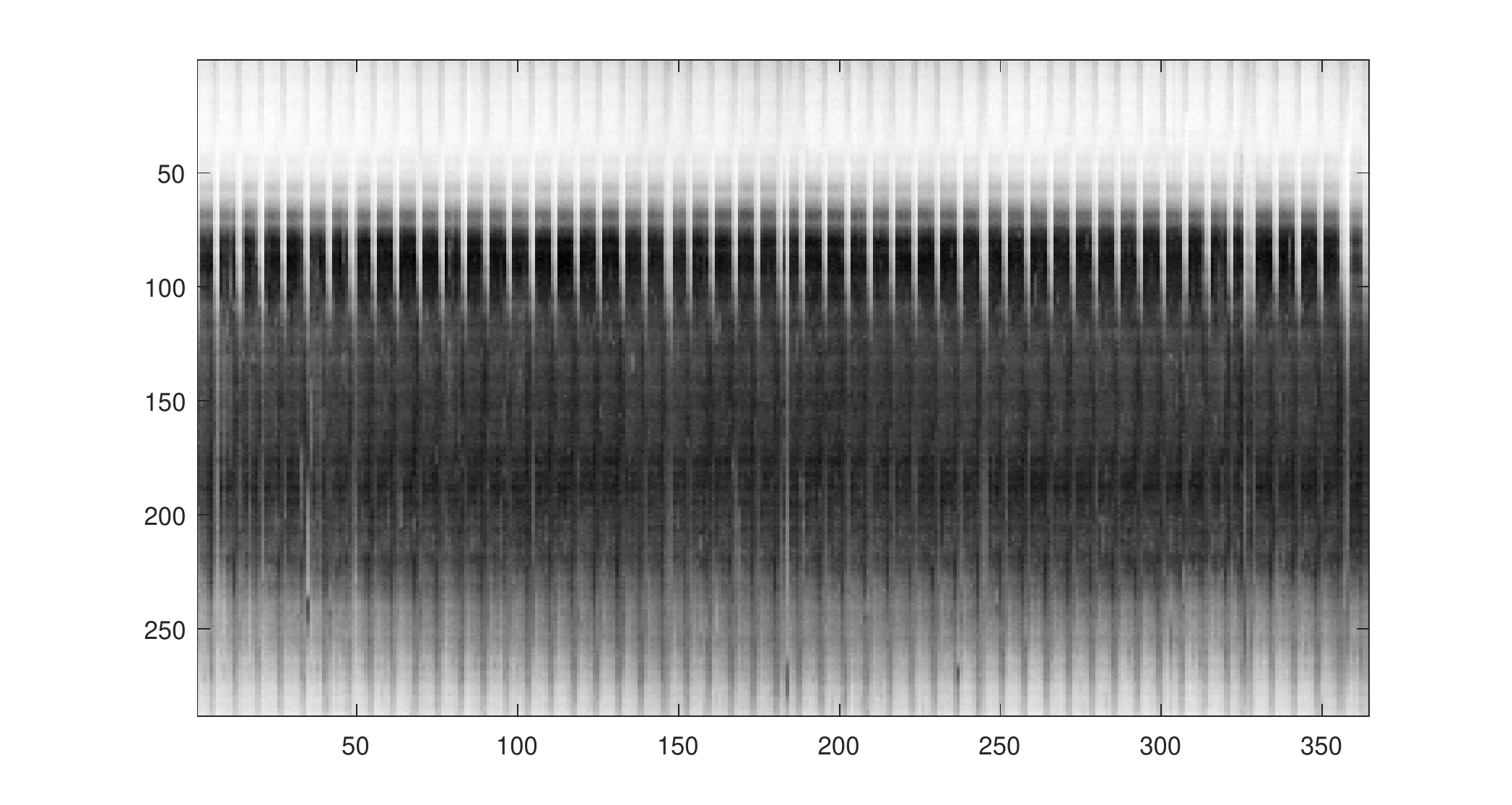}\hspace*{-.4cm}%
 \includegraphics[width=2.8in]{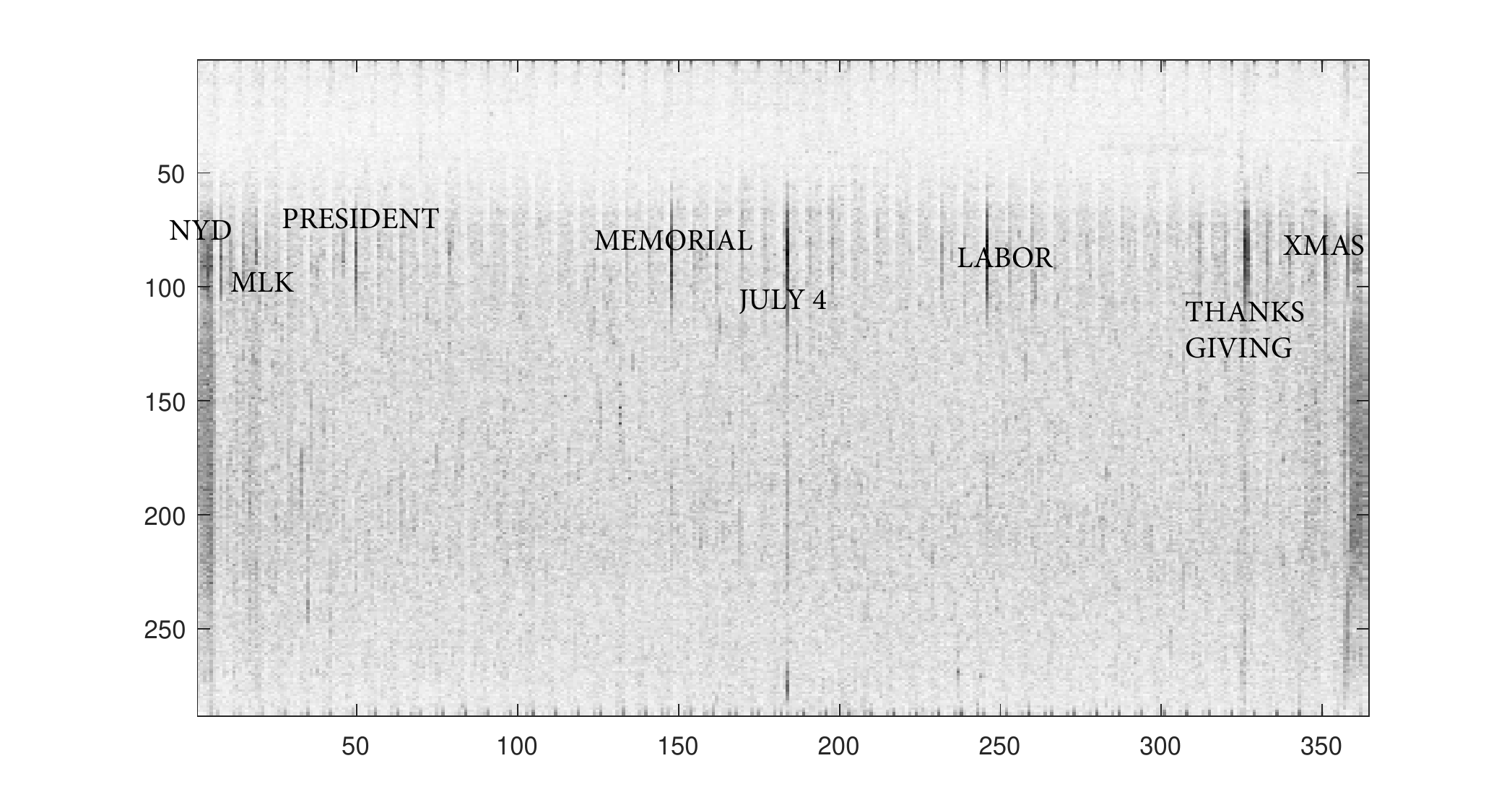}
 \caption{Traffic counts at a sensor in California during the year 2017 (left) and the output of the first layer of the scattering transform (right). The vertical coordinate denumerates the 288 5-minute blocks, the horizontal coordinate the day of the year. The grayscale is chosen so that dark represents higher values than light. The peaks in the output of the first layer are visible as dark vertical line segments. They represent traffic counts that diverge from the usual weekly periodicity. The reason for such significant anomalous behavior is most often a public holiday, as indicated in the image.}\label{fig:layer}
 \end{figure}

When computing the principal components of the output and deriving linear discriminants that can be used to
confine most of the output, linear combinations of the different layers appear, which show that some layers and some time periods are more important than others.
The outcome of the experiments
indicate that the norm estimates we derived could be refined with a more localized treatment in order to isolate shorter time intervals in which the traffic pattern deviates from the expected values. 
This will be pursued in future work.

%

\end{document}